\numberwithin{equation}{section}
\newtheorem*{mtheorem}{Main Theorem}
\newtheorem{lemma}{Lemma}[section]
\newtheorem{proposition}{Proposition}[section]
\theoremstyle{definition}
\newtheorem{definition}{Definition}[section]
\theoremstyle{remark}
\newtheorem{remark}{Remark}[section]
\newcommand{\Mod}[1]{\ (\mathrm{mod}\ #1)}
\newcommand{\sgn}{\operatorname{sgn}}
\newcommand{\defn}{\stackrel{\textrm{\scriptsize def}}{=}}
\DeclareMathOperator*{\argmin}{arg\,min}
\title{Expanding total sieve and patterns in primes}
\author{Andrzej Bo\.zek}
\email{abozek@prz.edu.pl}
\begin{document}

\begin{abstract}
Let $\big(\mathcal{S}_n^{\alpha,\kappa,\mathfrak{r}}(z)\big)_{n=1}^\infty$ be a parametrised sequence of the largest possible integer intervals, such that $z\in\mathcal{S}_n^{\alpha,\kappa,\mathfrak{r}}(z)\subset\overline{\mathcal{M}}_n^{\alpha,\kappa,\mathfrak{r}}=\bigcup_{i=1}^n [\mathfrak{r}_i]_{\mathfrak{p}_i}$ or $\mathcal{S}_n^{\alpha,\kappa,\mathfrak{r}}(z)=\emptyset$, where $z\in\mathbb{Z}$, $\mathfrak{p}_i=p_{\alpha+\left\lceil i/\kappa\right\rceil-1}$, and $p_k$ is the $k$-th prime. We prove that $\big(\#\mathcal{S}_n^{\alpha,\kappa,\mathfrak{r}}(z)\big)_{n=1}^\infty$ oscillates infinitely many times around $\beta_n\!=\!o\left(n^2\right)$ for any set of fixed $\alpha\in\mathbb{Z}^+$, $\kappa\in\mathbb{Z}\cap[1,p_\alpha)$, and $\mathfrak{r}_i\in\mathbb{Z}$. Let $\left(\mathcal{X}_n^{T,k,\rho,\eta}\right)_{n=1}^\infty$ be a parametrised sequence of sets of all the integers $x\in[\rho]_\eta$, $\rho\in\mathbb{Z}$, $\eta\in\mathbb{P}$, such that an occurrence of a given admissible $k$-tuple $T=(a_1,a_2,\ldots,a_k)$ at the position $x$ is sieved out by a set $\mathcal{M}_{n+\alpha-1}$, where $\mathcal{M}_g=\bigcup_{i=1}^g [0]_{p_i}$, in other words $\mathcal{X}_n^{T,k,\rho,\eta}=\left\{x\in[\rho]_\eta\,:\,\{x\!+\!a_1,x\!+\!a_2,\ldots,x\!+\!a_k\}\cap\mathcal{M}_{n+\alpha-1}\neq\emptyset\right\}$. We prove that for any admissible $k$-tuple $T$ and for some fixed $\alpha$, $\kappa$, $\rho$, $\eta$, $z$, and $\mathfrak{r}$, there exists a linear bijection between $\overline{\mathcal{M}}_{\kappa n}^{\alpha,\kappa,\mathfrak{r}}$ and $\mathcal{X}_n^{T,k,\rho,\eta}$ for each $n\in\mathbb{Z}^+$. It implies that the length of any expanding integer interval on which all occurrences of $T$ are sieved out by $\mathcal{M}_{n+\alpha-1}$ oscillates infinitely many times around $\widetilde{\beta}_n=o\left(n^2\right)$. According to the concept of the sieve of Eratosthenes, $\mathcal{E}_n=[2,p^2_{n+\alpha})\cap\left(\mathbb{Z}\setminus\mathcal{M}_{n+\alpha-1}\right)\subset\mathbb{P}$. Therefore, having $p^2_{n+\alpha}=\omega\left(n^2\right)$, we obtain that $\mathcal{E}_n$ includes a subset matched to $T$ for infinitely many values of $n$ and, consequently, $T$ matches infinitely many positions in the sequence of prime numbers.
\end{abstract}

\subjclass[2010]{Primary 11N13; Secondary 11N35, 11N69}
\keywords{prime $k$-tuples, sieve method, special residue classes}

\maketitle

\section{Introduction}
\label{sec:Introduction}
There exists a well-known simple necessary condition for a $k$-tuple of integers to match infinitely many positions in the sequence of prime numbers, called admissibility. On the other hand, it turns out to be very difficult to formulate and check a sufficient condition, both in a general form and in a form related to specific tuples. While it has been strongly believed that every admissible $k$-tuple matches infinitely many positions in the sequence of primes, this statement has had only a conjectural flavour for a long time. In particular, the widely-known first Hardy-Littlewood conjecture~\cite{hardy1923} predicts asymptotically growing function representing the number of prime constellations, i.e., the $k$-tuples of minimum diameters, and, as a consequence, states that this number is unbounded. The Hardy-Littlewood conjecture has been supported only by experimental results since 2013, when Zhang proved that $H_1<7\times 10^7$~\cite{zhang2014}, where
\[
	H_m=\liminf\limits_{n\to\infty}\left(p_{n+m}-p_n\right),\quad m=1,2,\ldots
\]
Shortly later, Maynard~\cite{maynard2015} and participants of Polymath project~\cite{polymath2014} improved Zhang's result, obtaining $H_1\leq 246$. Maynard proved, as well, that $H_m$ is finite for every $m$~\cite{maynard2015}, which is a strong qualitative generalization of the result related to $H_1$ and, in fact, it is the first proved statement confirming infinitude of general classes of $k$-tuples in the sequence of primes. The weakness of these breakthrough results is, however, that they involve some collections of $k$-tuples and no unambiguous statement has been formulated for a single $k$-tuple so far.

In this work, we propose an approach to prove that the admissibility is a sufficient condition for every $k$-tuple to be repeated infinitely many times in the sequence of primes. The approach is based on a concept of expanding total sieve, which is a generalized model of the sieving process of $k$-tuple matching positions. In particular, with the use of the expanding total sieve, we show that the sieving process is not efficient enough to cancel all repetitions of a given $k$-tuple inside the asymptotically growing sieve of Eratosthenes.

Standard notation is used in the work, wherein, for concise:
\[
	\left(a_n\right)\defn\left(a_n\right)_{n=1}^\infty,\qquad
	\llbracket a,b \rrbracket\defn [a,b]\cap\mathbb{Z}.
\]

The precise definitions and properties related to the concept of an expanding total sieve are given in the next section, here, we provide a brief explanation. Consider an infinite sequence of pairwise distinct residue classes $[\mathfrak{r}_n]_{\mathfrak{p}_n}$, ${n\in\mathbb{Z}^+}$, where $\mathfrak{p}_n\in\mathbb{P}$ and the sequence $\left(\mathfrak{p}_n\right)$ is non-decreasing. Let $\mathcal{M}_n^{\mathfrak{p},\mathfrak{r}}=\bigcup\limits_{i=1}^n [\mathfrak{r}_i]_{\mathfrak{p}_i}$, and let $\mathcal{P}_n^{\mathfrak{p},\mathfrak{r}}:\mathbb{Z}\mapsto\{0,1\}$ be the characteristic function of the set $\mathbb{Z}\setminus\mathcal{M}_n^{\mathfrak{p},\mathfrak{r}}$. The function $\mathcal{P}_n^{\mathfrak{p},\mathfrak{r}}$ defines a pattern on the ordered set $\mathbb{Z}$, such that $\mathcal{P}_n^{\mathfrak{p},\mathfrak{r}}(z)=0$ if $z$ is sieved out by a residue, i.e. $z\in\mathcal{M}_n^{\mathfrak{p},\mathfrak{r}}$, and $\mathcal{P}_n^{\mathfrak{p},\mathfrak{r}}(z)=1$ otherwise. As an example, suppose that $\mathfrak{p}_n=(3,3,5,5,7,7,11,11,\ldots)$ and $\mathfrak{r}_n=(1,2,4,0,5,6,7,10,\ldots)$. The resulting patterns $\mathcal{P}_n^{\mathfrak{p},\mathfrak{r}}$, for $n\in\llbracket 1,8\rrbracket$, in the domain restricted to $\llbracket 1,38\rrbracket$, are presented in Fig.~\ref{fig:examp}. Let $\mathcal{S}_n^{\mathfrak{p},\mathfrak{r}}(z)$, where $z\in\mathbb{Z}$, be the largest integer interval (in the sense of inclusion), such that $z\in\mathcal{S}_n^{\mathfrak{p},\mathfrak{r}}(z)\subset\mathcal{M}_n^{\mathfrak{p},\mathfrak{r}}$ and, in particular, $\mathcal{S}_n^{\mathfrak{p},\mathfrak{r}}(z)=\emptyset$ if $z\not\in\mathcal{M}_n^{\mathfrak{p},\mathfrak{r}}$. Hence, an interval $\mathcal{S}_n^{\mathfrak{p},\mathfrak{r}}(z)$ contains integers which are locally "totally" sieved out by residues in $\mathcal{M}_n^{\mathfrak{p},\mathfrak{r}}$ and so it will be referred to as a total sieve around $z$. For example, according to Fig.~\ref{fig:examp},we have: $\mathcal{S}_3^{\mathfrak{p},\mathfrak{r}}(23)=\llbracket 22,26\rrbracket$, $\mathcal{S}_5^{\mathfrak{p},\mathfrak{r}}(9)=\mathcal{S}_5^{\mathfrak{p},\mathfrak{r}}(12)=\mathcal{S}_5^{\mathfrak{p},\mathfrak{r}}(17)=\llbracket 7,17\rrbracket$, $\mathcal{S}_7^{\mathfrak{p},\mathfrak{r}}(21)=\emptyset$. In this work, we will not exploit the concept of a total sieve itself, but rather the construct of a sequence of total sieves $\left(\mathcal{S}_n^{\mathfrak{p},\mathfrak{r}}(z)\right)$ around a fixed integer $z$. This construct will be called an expanding total sieve, because it is easy to notice that $\mathcal{S}_n^{\mathfrak{p},\mathfrak{r}}(z)\subseteq\mathcal{S}_{n+1}^{\mathfrak{p},\mathfrak{r}}(z)$, as a consequence of inclusion $\mathcal{M}_n^{\mathfrak{p},\mathfrak{r}}\subseteq\mathcal{M}_{n+1}^{\mathfrak{p},\mathfrak{r}}$. In~Fig.~\ref{fig:examp}, the example of $\left(\mathcal{S}_n^{\mathfrak{p},\mathfrak{r}}(7)\right)$ is depicted and its 8-th element $\mathcal{S}_8^{\mathfrak{p},\mathfrak{r}}(7)=\llbracket 4,35\rrbracket$ is denoted. To be completely precise, we will use an $(\alpha,\kappa)$--regular variants of the sieving pattern and related expanding total sieve, denoted by $\mathcal{P}_n^{\alpha,\kappa,\mathfrak{r}}$ and $\left(\mathcal{S}_n^{\alpha,\kappa,\mathfrak{r}}(z)\right)$, respectively, in which $\mathfrak{p}_n=p_{\alpha+\left\lceil n/\kappa\right\rceil-1}$, where $\alpha,\kappa\in\mathbb{Z}^+$, $\kappa<p_\alpha$.

Employing the concept of the $(\alpha,\kappa)$--regular expanding total sieve, the reasoning flow used in the work can be sketched in the following points:
\begin{enumerate}
\item We prove that each sequence $\left(\#\mathcal{S}_n^{\alpha,\kappa,\mathfrak{r}}(z)\right)$ oscillates infinitely many times around $\beta_n=o(n^2)$ [Lemma~\ref{lem:totalSieveUB}, Proposition~\ref{prop:primesOm}]. The asymptotic upper bound on the expansion rate of $\left(\#\mathcal{S}_n^{\alpha,\kappa,\mathfrak{r}}(z)\right)$ is the crucial result of the work, which quite easily translates into Main Theorem.
\label{it:oscillations}
\item Let $\mathcal{P}_n$ be defined as a variant of $\mathcal{P}_n^{\mathfrak{p},\mathfrak{r}}$, such that $\mathfrak{p}_n=p_n$ and $\mathfrak{r}_n=0$ for each $n\in\mathbb{Z}^+$. Let $T=(a_1,a_2,\ldots,a_k)$, where $a_1<a_2<\ldots<a_k$, be an admissible $k$-tuple. It is an obvious consequence of admissibility, that $T$ matches $\mathcal{P}_n$ for some $n\in\mathbb{Z}^+$ at some position ${x\in\mathbb{Z}^+}$, moreover, the matching repeats in an arithmetic progression constituting the residue class $[x]_{p_n\#}$, because $\mathcal{P}_n$ is periodic, with the fundamental period $p_n\#$ [Lemma~\ref{lem:sievePatPeriod}, Remark~\ref{rem:patAvgDens}]. In the subsequent patterns $\mathcal{P}_f$, $f={n+1},{n+2},\ldots$, the set $[x]_{p_n\#}$ of matchings of $T$ is systematically reduced, because $(p_a,p_b)=1$ for $a\neq b$. Let us describe the sieving of the $k$-tuple $T$ matching set $[x]_{p_n\#}$ using a notation $^\dagger\mathcal{P}^{T,k,d,m}_g(z)$, where $m=\min\left\{[x]_{p_n\#}\cap\mathbb{Z}^+\right\}$, $d=n+1$, $g\in\mathbb{Z}^+$, $z\in\mathbb{Z}$, stating that $^\dagger\mathcal{P}^{T,k,d,m}_g(z)=1$ if $T$ matches the position $m+(z-1)p_n\#$ in $\mathcal{P}_{n+g}$, and $^\dagger\mathcal{P}^{T,k,d,m}_g(z)=0$ otherwise.
\label{it:matchset}
\item One can prove that $^\dagger\mathcal{P}^{T,k,d,m}_g$, such that $a_k-a_1<p_d$, is an $(\alpha,\kappa)$--regular sieving pattern. In other words, for each well defined $^\dagger\mathcal{P}^{T,k,d,m}_g$ there exists $\mathcal{P}_n^{\alpha,\kappa,\mathfrak{r}}$, such that $^\dagger\mathcal{P}^{T,k,d,m}_g=\mathcal{P}_n^{\alpha,\kappa,\mathfrak{r}}$, where, in particular, $\alpha=d$, $\kappa=k$, and $n=kg$ [Lemma~\ref{lem:tupleSieveing}].
\label{it:patternequiv}
\item The pattern $\mathcal{P}_n$ restricted to the interval $E_n=\left\llbracket 2,p_{n+1}^2\!-\!1\right\rrbracket$ represents the sieve of Eratosthenes constructed up to the prime $p_n$, in the sense that, if $z\in E_n$ and $\mathcal{P}_n(z)=1$, then $z\in\mathbb{P}$ [Lemma~\ref{lem:patEratPrimes}]. Hence, if an admissible $k$-tuple $T$ matches primes only finitely many times, each residue class $[m]_{p_{d-1}\#}$ of the matching points of $T$ in $\mathcal{P}_n$ is completely sieved out inside $\left\llbracket z^*,p_{n+1}^2\!-\!1\right\rrbracket$ for some fixed $z^*\in\mathbb{Z}^+$, and for each $n\in\mathbb{Z}^+$. Therefore, according to the point~(\ref{it:matchset}), if $n=d+g-1$, then $^\dagger\mathcal{P}^{T,k,d,m}_g(z)=0$ for each $z\in Z_g$ such that $Z_g\subseteq Z_{g+1}$ and $\#Z_g\asymp p_g^2=\omega\left(g^2\right)$.
\label{it:asympgrow}
\item Combining the results from the points~(\ref{it:patternequiv}) and~(\ref{it:asympgrow}), we obtain that, if an admissible $k$-tuple $T$ matches primes finitely many times, then there exists $\mathcal{P}^{d,k,\mathfrak{r}}_{kg}={^\dagger\mathcal{P}^{T,k,d,m}_g}$ and, consequently, there exists an expanding total sieve $\mathcal{S}^{d,k,\mathfrak{r}}_{kg}(\widetilde{z})$ around some $\widetilde{z}\in\mathbb{Z}$, such that $Z_g\subseteq\mathcal{S}^{d,k,\mathfrak{r}}_{kg}(\widetilde{z})$. Therefore, according to the point~(\ref{it:asympgrow}), we get $\#\mathcal{S}^{d,k,\mathfrak{r}}_{kg}(\widetilde{z})=\omega\left(g^2\right)$. However, from the point~(\ref{it:oscillations}), we have that $\left(\#\mathcal{S}^{d,k,\mathfrak{r}}_{kg}(\widetilde{z})\right)$ oscillates infinitely many times around $\beta_g=o(g^2)$, hence, we obtain a contradiction.
\end{enumerate}

\begin{figure}
	\centering
	\includegraphics[width=\textwidth]{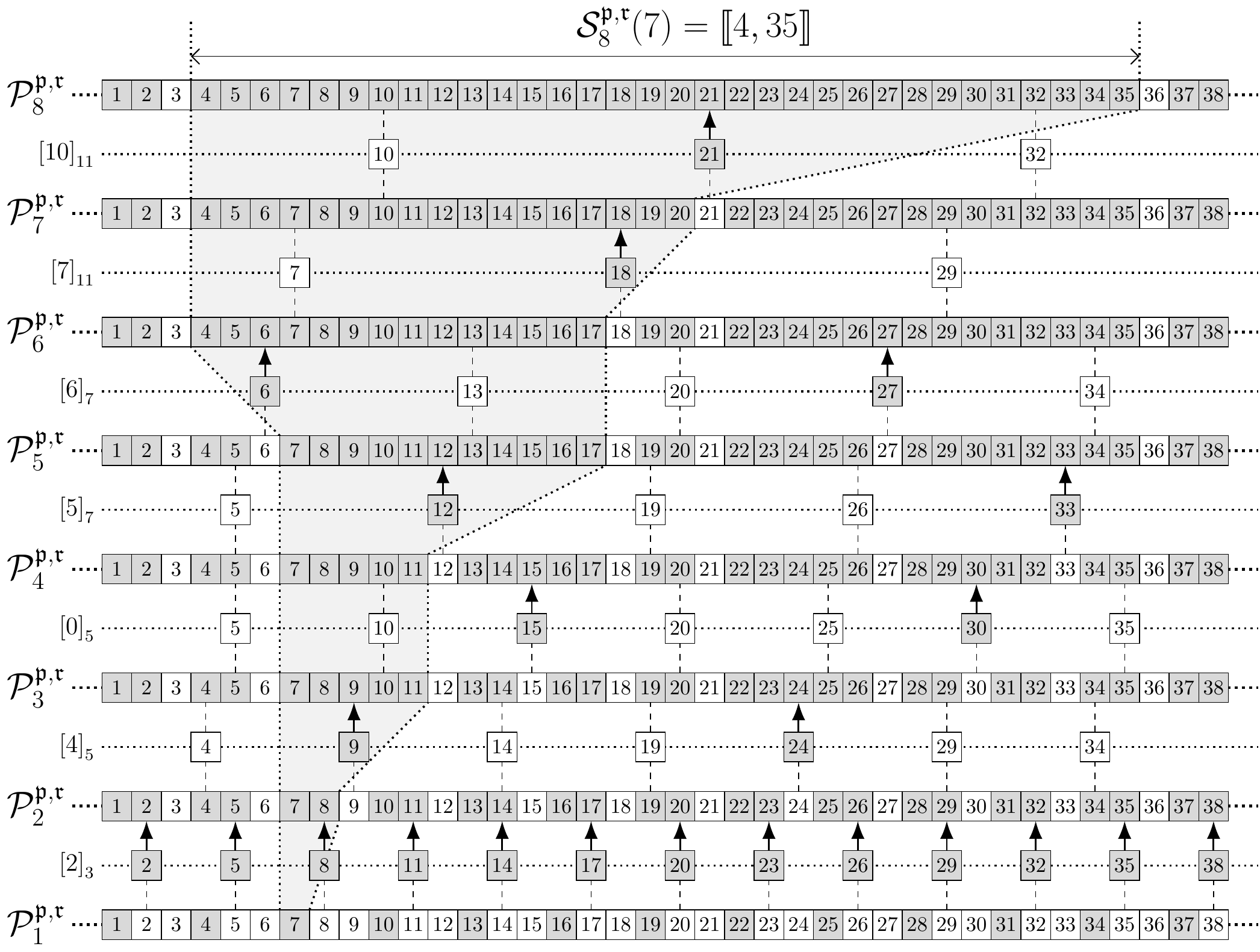}
	\caption{An example of an expanding total sieve around $z=7$}
	\label{fig:examp}
\end{figure}

In the work, we use loose estimates of the asymptotic grow rates $p_n=\omega(n)$ [Lemma~\ref{lem:primesOmN}] and $\prod_{i=0}^{n-1}\frac{p_{\alpha+i}}{p_{\alpha+i}-\kappa}=o(n)$ [Lemma~\ref{lem:patAvgLenAsymp}], introduced \textit{ad hoc} by simple proofs. Therefore, the result presented here does not rely directly on the prime number theorem~\cite{poussin1896,hadamard1896} and the generalized form of the third Mertens theorem~\cite{mertens1874} which provide well-known much more exact estimates for the mentioned sequences.

\subsection{Guiding illustrative example}
\label{subsec:guidexamp}

To make the proposed approach easier to follow, we provide an illustrative example based on the arbitrarily chosen admissible triplet $(0,2,6)$. Hence, we consider the $k$-tuple such that $k=3$ and $T=(a_1,a_2,a_3)=(0,2,6)$.

The approach can be split into three conceptual phases. In the first phase, we construct an initial setting, sieving $\mathbb{Z}$ by residue classes $[0]_{p_i}$ for $i\in\llbracket 1,d\!-\!1\rrbracket$, and choosing an arithmetic progression of matching positions of $T$ in the sieved set. In the second phase, we consider an infinite process of sieving by all the subsequent residue classes $[0]_{p_d},[0]_{p_{d+1}},\ldots$ in the context of cancellation of the $k$-tuples from the arithmetic progression constructed in the previous phase. The value of $d$ has to be large enough for the method to work correctly, we choose $d=4$. In the third phase, we employ the concept of the basic sieve of Eratosthenes, along with the original idea of the expanding total sieve, to show that the modelled sieving process is not efficient enough to eliminate all but finite number of matchings of $T$ in the sequence of primes.

\begin{figure}
	\centering
	\includegraphics[width=1.42\textwidth,angle=90]{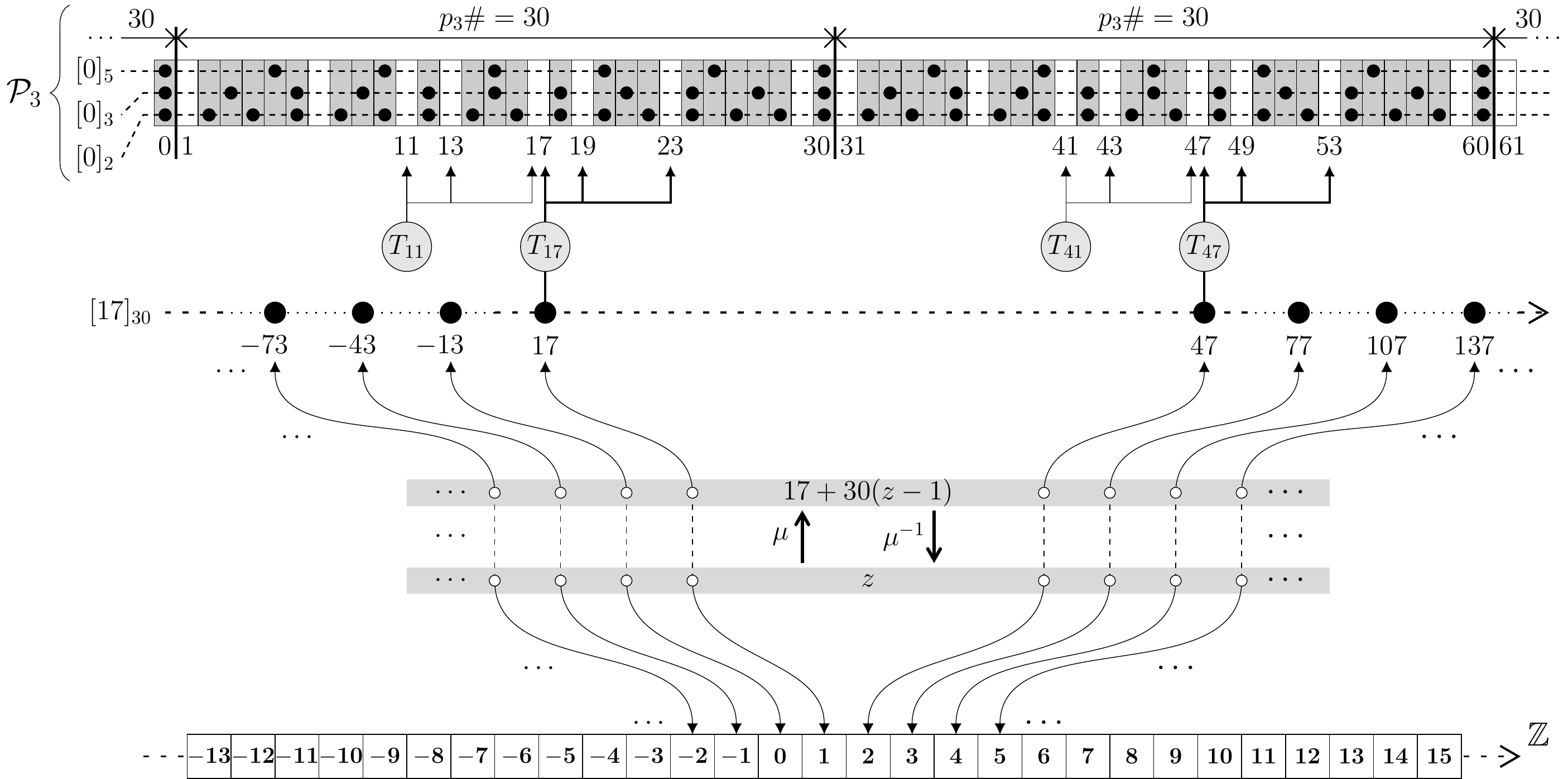}
	\caption{An example based on the triplet $(0,2,6)$, the initial setting}
	\label{fig:illustr1}
\end{figure}
\begin{figure}
	\centering
	\includegraphics[width=1.42\textwidth,angle=90]{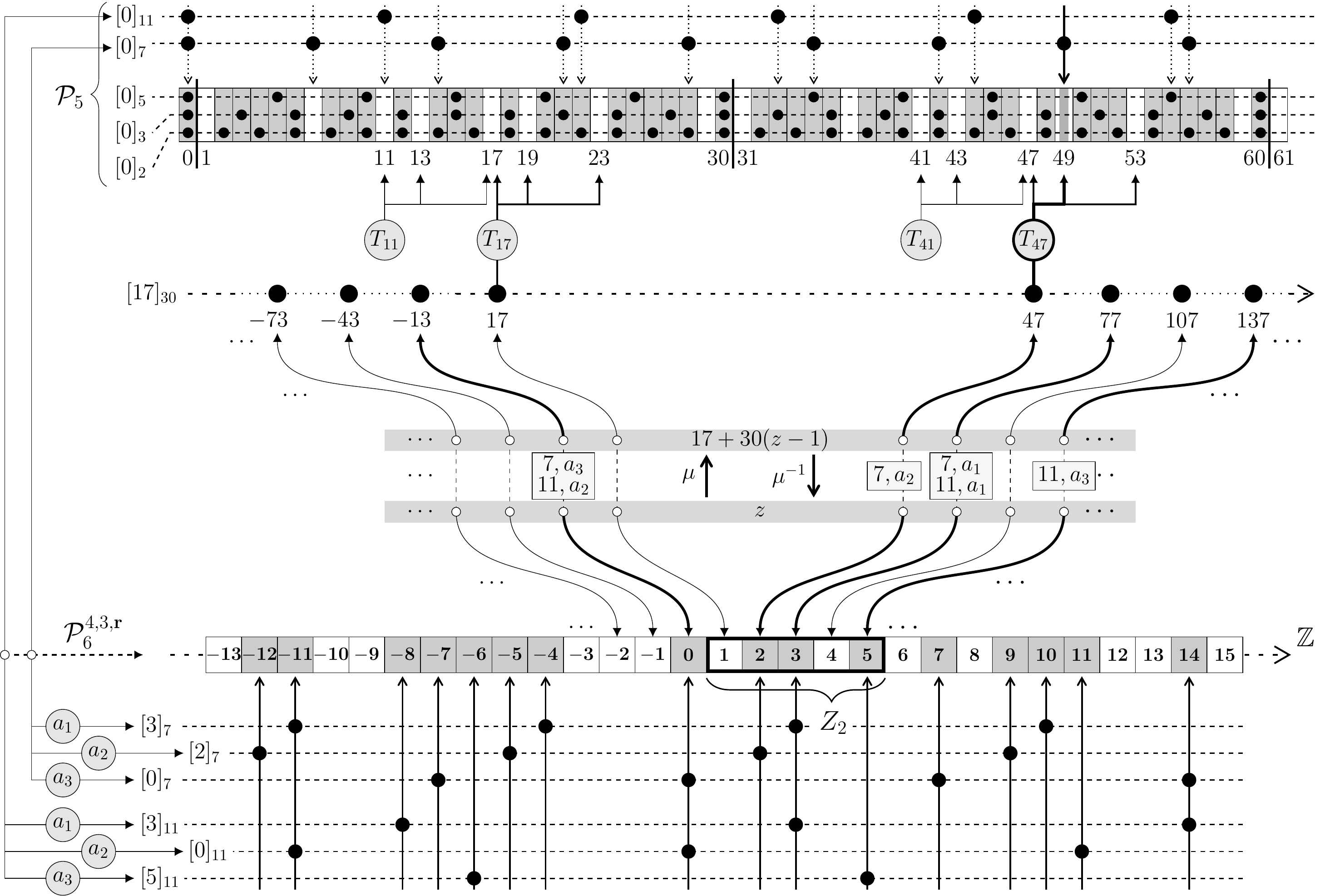}
	\caption{An example based on the triplet $(0,2,6)$, the sieving procedure}
	\label{fig:illustr2}
\end{figure}

The first phase is illustrated in Fig.~\ref{fig:illustr1}. At the top of the figure, the result of sieving $\mathbb{Z}$ by residue classes $[0]_{p_1},[0]_{p_2},[0]_{p_3}$ is presented, and it is described by the function $\mathcal{P}_3$, where $\mathcal{P}_n$ is defined as the characteristic function of the set $\mathbb{Z}\setminus \bigcup_{i=1}^n [0]_{p_i}$. In general, every function $\mathcal{P}:\mathbb{Z}\to\{0,1\}$ is called a pattern in this work, and we will say that a $k$-tuple $T=(a_1,a_2,\ldots,a_k)$ matches a patter $\mathcal{P}$ at a position $m$, or that there exists a matched instance $T_m$, if and only if $\mathcal{P}(m+a_1)=\mathcal{P}(m+a_2)=\ldots=\mathcal{P}(m+a_k)=1$. According to Fig.~\ref{fig:illustr1}, in the considered example, we have matched instances $T_{11}$, $T_{17}$, $T_{41}$, and $T_{47}$ inside $\llbracket 0,61\rrbracket$. It is well known that the pattern $\mathcal{P}_n$ repeats modulo $p_n\#$, hence, the matching positions form two residue classes: $[11]_{30}$ and $[17]_{30}$. In the proposed method, we need to choose one of these classes. Let us select $[17]_{30}$. Additionally, we define a linear bijection $\mu:\mathbb{Z}\to [17]_{30}$, such that $\mu(z)=17+30(z-1)$, which is also presented in Fig.~\ref{fig:illustr1}.

In the second phase, we analyse the sieving of $T$ instances matched at the previously selected positions in $[17]_{30}$ by residues from successively added classes $[0]_{p_i}$, $i=4,5,\ldots$ We classify a matching position $m$ as sieved out if there exists a residue $\rho\in\mathbb{Z}$ such that $\rho=m+a$ for some $a\in\{a_1,a_2,a_3\}=\{0,2,6\}$, so that $T$ is no more matched at the position $m$, after sieving by a residue class containing $\rho$. W can see, directly from Fig.~\ref{fig:illustr2}, that $49\in [0]_7$ sieves out the element $a_2$ of $T_{47}$ and, consequently, it sieves out the matched instance $T_{47}$ itself. Moreover $(7,5\#)=1$, hence, $[0]_7$ sieves out the element $a_2$ in every 7-th repetition of the matched instance $T_m$ for $m\in [17]_{30}$, thus, it sieves out all the instances $T_m$ such that $m\in [47]_{30\times 7}$. Notice that $[47]_{30\times 7}$ is mapped onto $[2]_7$ by $\mu^{-1}$. Analogously, $[0]_7$ sieves out the instances matched at positions in $[77]_{30\times 7}$ (being congruent to $a_1$) and $[197]_{30\times 7}$ (being congruent to $a_3$). The residue classes $[77]_{30\times 7}$ and $[197]_{30\times 7}$ are mapped onto $[3]_7$ and $[0]_7$ by $\mu^{-1}$, respectively. The same rule can be applied to all the subsequent residue classes $[0]_{11},[0]_{13},\ldots$, the case of $[0]_{11}$ is included in Fig.~\ref{fig:illustr2}. Finally, we observe that the set of matching positions of instances of $T=(0,2,6)$ in $[17]_{30}$, after sieving these instances by $\bigcup_{i=4}^{3+n} [0]_{p_i}$, and after mapping by $\mu^{-1}$, results in
\begin{align*}
	\mathcal{I}_{3n}^{4,3,\mathbf{r}}%
	=\mathbb{Z}\,\setminus\,\big(&[3]_7\cup [2]_7\cup [0]_7\cup [3]_{11}\cup [0]_{11}\cup [5]_{11}\\
	&\cup [\mathbf{r}_7]_{13}\cup [\mathbf{r}_8]_{13}\cup [\mathbf{r}_9]_{13}\cup\cdots
	\cup [\mathbf{r}_{3n-2}]_{p_{3+n}}\cup [\mathbf{r}_{3n-1}]_{p_{3+n}}\cup [\mathbf{r}_{3n}]_{p_{3+n}}\big)
\end{align*}
for some sequence $\left(\mathbf{r}_i\right)_{i=1}^{3n}$. Let $\mathcal{P}_{3n}^{4,3,\mathbf{r}}$ be the characteristic function of $\mathcal{I}_{3n}^{4,3,\mathbf{r}}$.

In the third phase, we start from the simple observation, that the pattern $\mathcal{P}_5$ (see Fig.~\ref{fig:illustr2}), in the domain restricted to $\llbracket 2,168\rrbracket$, represents the basic sieve of Eratosthenes, in the sense that, if $z\in\llbracket 2,168\rrbracket$ and $\mathcal{P}_5(z)=1$, then $z\in\mathbb{P}$. The triplets $(m,m+2,m+6)$, $m\in [17]_{30}$, are entirely included in $\llbracket 2,168\rrbracket$ for $m\in M_2=\{17,47,77,107,137\}$. The set $M_2$ is mapped onto $Z_2=\llbracket 1,5\rrbracket$ by $\mu^{-1}$ (Fig.~\ref{fig:illustr2}). Therefore, the triplet $\big(\mu(z),{\mu(z)+2},{\mu(z)+6}\big)$, $z\in Z_2$, matches primes if and only if $\mathcal{P}_6^{4,3,\mathbf{r}}(z)=1$. This observation can be extended to any $n\in\mathbb{Z}^+$, and the related $\mathcal{P}_{d-1+n}$, $\mathcal{P}_{kn}^{d,k,\mathbf{r}}$, $M_n$, $Z_n$, while the case for $d=4$, $k=3$, $n=2$ is presented in Fig.~\ref{fig:illustr2}. In general, the pattern $\mathcal{P}_{d-1+n}$ includes the representation of the sieve of Eratosthenes in the subset of its domain $E_n=\llbracket 2,p_{d+n}^2\!-\!1\rrbracket$. In the asymptotic behaviour under $n\to\infty$, we have $\#E_n\asymp\#M_n=\#Z_n$, because the distance between consecutive ordered elements of $M_n$ is equal and it does not depend on $n$. Therefore, from the obvious property $\#E_n\sim p_n^2=\omega(n^2)$, we obtain $\#Z_n=\omega(n^2)$. Suppose that $T=(0,2,6)$ matches primes only finitely many times. Hence, there exists the largest $m^*\in[17]_{30}$, such that $\{m^*,m^*\!+\!2,m^*\!+\!6\}\subset\mathbb{P}$. Consequently, $\mathcal{P}_{3n}^{4,3,\mathbf{r}}(z)=0$ for each $z\in Z^*_n=\big(\mu^{-1}(m^*),\infty\big)\cap Z_n$. We get immediately that $\#Z^*_n=\omega(n^2)$. We obtain, as well, that $Z^*_n\subseteq \mathcal{S}_n^{\alpha,\kappa,\mathfrak{r}}(z)$ for each $n\in\mathbb{Z}^+$, where the sequence $\left(\mathcal{S}_n^{\alpha,\kappa,\mathfrak{r}}(z)\right)$ conforms the definition of an $(\alpha,\kappa)$--regular expanding total sieve under $\alpha=d=4$, $\kappa=k=3$, $\mathfrak{r}=\mathbf{r}$, and some properly fixed $z$. However, we prove in Lemma~\ref{lem:totalSieveUB} that the sequence $\left(\#\mathcal{S}_n^{\alpha,\kappa,\mathfrak{r}}(z)\right)$ does not grow systematically faster than $o(n^2)$, hence, we obtain a contradiction.

\section{Expanding total sieve}
\label{sec:sieve}
\begin{definition}
A sequence $\mathcal{M}^{\mathfrak{p},\mathfrak{r}}:\mathbb{Z}^+\mapsto 2^\mathbb{Z}$ such that $\mathcal{M}_n^{\mathfrak{p},\mathfrak{r}}=\bigcup\limits_{i=1}^n [\mathfrak{r}_i]_{\mathfrak{p}_i}$ will be referred to as an ordered sieving model, induced by a prime sieving sequence $\mathfrak{p}:\mathbb{Z}^+\mapsto\mathbb{P}$ and a residue sieving sequence $\mathfrak{r}:\mathbb{Z}^+\mapsto\mathbb{Z}^+\cup\{0\}$, providing that
\begin{equation}
	\left(\forall n\in\mathbb{Z}^+\right)\quad\mathfrak{p}_n\leq\mathfrak{p}_{n+1},
	\label{eq:sievSequence1}
\end{equation}
\begin{equation}
	\left(\forall n\in\mathbb{Z}^+\right)\quad\mathfrak{r}_n<\mathfrak{p}_n,
	\label{eq:sievSequence2}
\end{equation}
\begin{equation}
	\left(\forall p\in\mathbb{P}\right)\quad\#\left\{n\in\mathbb{Z}^+:\mathfrak{p}_n=p\right\}<p,
	\label{eq:sievSequence3}
\end{equation}
\begin{equation}
	\left(\forall (n,m)\in\left(\mathbb{Z}^+\right)^2\right)\quad\mathfrak{p}_n=\mathfrak{p}_m
	\Longrightarrow\mathfrak{r}_n\neq	\mathfrak{r}_m.
	\label{eq:sievSequence4}
\end{equation}
\label{def:sievSequence}
\end{definition}
\begin{remark}
The model introduced in Definition~\ref{def:sievSequence} is ordered in the sense determined by~(\ref{eq:sievSequence1}). By the constraints~(\ref{eq:sievSequence2})--(\ref{eq:sievSequence4}), we exclude in advance the trivial cases, such that $\mathcal{M}_n^{\mathfrak{p},\mathfrak{r}}=\mathbb{Z}$ for all $n\geq N\in\mathbb{Z}^+$, or $\mathcal{M}_n^{\mathfrak{p},\mathfrak{r}}=\mathcal{M}_{n+1}^{\mathfrak{p},\mathfrak{r}}$ for some $n\in\mathbb{Z}^+$.
\end{remark}

\begin{definition}
A characteristic function of $\mathbb{Z}\setminus\mathcal{M}_n^{\mathfrak{p},\mathfrak{r}}$ under a fixed value of $n$, i.e., the function $\mathcal{P}_n^{\mathfrak{p},\mathfrak{r}}:\mathbb{Z}\mapsto\{0,1\}$, such that
\[
	\mathcal{P}_n^{\mathfrak{p},\mathfrak{r}}(z)=\begin{cases}
		0 &\text{if }z\in\mathcal{M}_n^{\mathfrak{p},\mathfrak{r}},\\
		1 &\text{otherwise},
	\end{cases}
\]
will be referred to as a sieving pattern induced by $\mathcal{M}^{\mathfrak{p},\mathfrak{r}}_n$.
\label{def:sievPattern}
\end{definition}

\begin{definition}
A function $\mathcal{S}_n^{\mathfrak{p},\mathfrak{r}}:\mathbb{Z}\mapsto 2^\mathbb{Z}$, such that
\[
\begin{split}
	\mathcal{S}_n^{\mathfrak{p},\mathfrak{r}}(z)=\Big\{x\in\llbracket a\!+\!1,b\!-\!1\rrbracket:\;\;
	(a,b)\in\mathbb{Z}^2,\;&a\leq z\leq b,\;\sgn(z\!-\!a)=\sgn(b\!-\!z),\\
	&\mathcal{P}_n^{\mathfrak{p},\mathfrak{r}}(a)=\mathcal{P}_n^{\mathfrak{p},\mathfrak{r}}(b)=1,\;
	\sum\limits_{i=a}^b \mathcal{P}_n^{\mathfrak{p},\mathfrak{r}}(i) \leq 2\Big\},
\end{split}
\]
will be called a total sieving function, and the set $\mathcal{S}_n^{\mathfrak{p},\mathfrak{r}}(z)$ will be referred to as a total sieve around $z$ in the sieving pattern $\mathcal{P}_n^{\mathfrak{p},\mathfrak{r}}$.
\end{definition}

\begin{definition}
A sequence of sets $\left(\mathcal{S}_n^{\mathfrak{p},\mathfrak{r}}(z)\right)$ under fixed value of $z$ will be referred to as an expanding total sieve around $z$.
\end{definition}

\begin{lemma}
A sieving pattern $\mathcal{P}_n^{\mathfrak{p},\mathfrak{r}}$ is a periodic function and it has the fundamental period
\begin{equation}
	\mathcal{T}_n^\mathfrak{p}=
	\prod_{k\in\mathbb{Z}^+\::\:\left(\exists\,z\in \llbracket 1,n\rrbracket\right)\;\mathfrak{p}_z=p_k}p_k.
\label{eq:sievPatPeriod}
\end{equation}
\label{lem:sievePatPeriod}
\end{lemma}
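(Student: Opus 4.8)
The plan is to establish the two halves separately: that $\mathcal{T}_n^\mathfrak{p}$ is a period of $\mathcal{P}_n^{\mathfrak{p},\mathfrak{r}}$, and that it is the \emph{fundamental} one. The organising observation is that the set of all periods of any periodic function on $\mathbb{Z}$ is a subgroup of $(\mathbb{Z},+)$, hence generated by a unique nonnegative integer $T_0$; the function $\mathcal{P}_n^{\mathfrak{p},\mathfrak{r}}$ is nonconstant (it takes both values, as will be clear below), so $T_0$ is positive and equals the fundamental period. Thus, once I show $\mathcal{T}_n^\mathfrak{p}$ is a period it remains only to prove that every period is a multiple of it, equivalently that no proper divisor of $\mathcal{T}_n^\mathfrak{p}$ is a period.

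The periodicity half is routine. Each residue class $[\mathfrak{r}_i]_{\mathfrak{p}_i}$ is invariant under translation by $\mathfrak{p}_i$, and by the definition~(\ref{eq:sievPatPeriod}) every $\mathfrak{p}_i$ with $i\in\llbracket 1,n\rrbracket$ divides $\mathcal{T}_n^\mathfrak{p}$. Hence translation by $\mathcal{T}_n^\mathfrak{p}$ fixes each class, therefore fixes $\mathcal{M}_n^{\mathfrak{p},\mathfrak{r}}$ and its complement, giving $\mathcal{P}_n^{\mathfrak{p},\mathfrak{r}}(z+\mathcal{T}_n^\mathfrak{p})=\mathcal{P}_n^{\mathfrak{p},\mathfrak{r}}(z)$ for all $z$. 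In particular $\mathcal{T}_n^\mathfrak{p}$ is squarefree, being a product of distinct primes.

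The fundamentality half is where the work lies. For each prime $p$ dividing $\mathcal{T}_n^\mathfrak{p}$, I would consider the set $A_p=\{\mathfrak{r}_i\bmod p:i\in\llbracket 1,n\rrbracket,\ \mathfrak{p}_i=p\}$ of residues blocked at the level of $p$; by~(\ref{eq:sievSequence2}) these are genuine residues mod $p$, by~(\ref{eq:sievSequence4}) they are distinct, and by~(\ref{eq:sievSequence3}) there are fewer than $p$ of them, while $p\mid\mathcal{T}_n^\mathfrak{p}$ forces at least one. Hence $\emptyset\neq A_p\subsetneq\mathbb{Z}/p\mathbb{Z}$ for every such $p$. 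Now suppose some period $T$ is a proper divisor of $\mathcal{T}_n^\mathfrak{p}$; since $\mathcal{T}_n^\mathfrak{p}$ is squarefree, some prime $p\mid\mathcal{T}_n^\mathfrak{p}$ satisfies $p\nmid T$. I would pick $c\in A_p$ and $c'\in(\mathbb{Z}/p\mathbb{Z})\setminus A_p$, then use the Chinese remainder theorem over the distinct primes dividing $\mathcal{T}_n^\mathfrak{p}$ to build integers $u,v$ with $u\equiv c$ and $v\equiv c'\pmod p$, with $v$ chosen to avoid $A_q$ for \emph{every} prime $q\mid\mathcal{T}_n^\mathfrak{p}$, and with $u\equiv v$ modulo each prime dividing $T$. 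The last requirement is consistent precisely because $p\nmid T$, so no constraint is imposed at $p$. Then $u\in\mathcal{M}_n^{\mathfrak{p},\mathfrak{r}}$ (it meets the class whose residue mod $p$ is $c$), while $v\notin\mathcal{M}_n^{\mathfrak{p},\mathfrak{r}}$ (it avoids every class), yet $u\equiv v\pmod T$; this contradicts that $T$ is a period. Therefore every prime factor of $\mathcal{T}_n^\mathfrak{p}$ divides $T$, so $\mathcal{T}_n^\mathfrak{p}\mid T$, which together with $T\mid\mathcal{T}_n^\mathfrak{p}$ yields $T=\mathcal{T}_n^\mathfrak{p}$.

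I expect the main obstacle to be the bookkeeping in the CRT step: I must simultaneously force $v$ to miss every blocked set $A_q$, force $u$ to land in $A_p$, and impose $u\equiv v$ modulo each prime dividing $T$, checking that these prescriptions never collide (they do not, since the modulus $p$ carrying the disagreement is coprime to $T$, and the agreement at primes dividing $T$ is imposed only on residues lying outside the respective $A_q$). The conceptual point worth highlighting is that hypotheses~(\ref{eq:sievSequence2})--(\ref{eq:sievSequence4}) are exactly what guarantee the ``gap'' $A_p\subsetneq\mathbb{Z}/p\mathbb{Z}$ at each prime; were a prime fully covered, it could be translated away freely and would drop out of the fundamental period, so making this dependence explicit is the crux.
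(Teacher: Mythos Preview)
Your argument is correct, and it is genuinely different from the paper's. The paper proceeds by induction on the number of distinct primes in $\{\mathfrak{p}_1,\ldots,\mathfrak{p}_n\}$: it establishes the base case where all $\mathfrak{p}_i$ equal a single prime $\eta$ (period $\eta$), then argues that adjoining a new block of residue classes all with modulus $\rho>\mathfrak{p}_n$ multiplies the period by $\rho$ because $(\rho,\mathcal{T}_n^\mathfrak{p})=1$. The inductive step is stated only informally (``the same value of residues modulo $\rho$ repeats exactly every $\rho$-th multiple of $\mathcal{T}_n^\mathfrak{p}$''), and fundamentality is not isolated as a separate claim.

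Your route is a direct two-part argument: periodicity from divisibility, fundamentality from an explicit CRT witness. The advantage of your approach is that it makes completely transparent where each of the hypotheses (\ref{eq:sievSequence2})--(\ref{eq:sievSequence4}) enters---namely in guaranteeing $\emptyset\neq A_p\subsetneq\mathbb{Z}/p\mathbb{Z}$---and it yields a concrete pair $u,v$ certifying that no proper divisor can be a period. The paper's induction, by contrast, is shorter to state but leaves the reader to unpack why the new period cannot be smaller than $\rho\,\mathcal{T}_n^\mathfrak{p}$; in effect the CRT content is implicit there too. Your version would serve well as a self-contained replacement.
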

\begin{proof}
Notice that, for $\eta=\mathfrak{p}_1=\mathfrak{p}_2=\ldots=\mathfrak{p}_n<\mathfrak{p}_{n+1}$, we have $\mathcal{T}_n^\mathfrak{p}=\eta$, because residues in all the involved classes repeat modulo $\eta$. Observe that the obtained value conforms the formula~(\ref{eq:sievPatPeriod}). Suppose that a function $\mathcal{P}_n^{\mathfrak{p},\mathfrak{r}}$ has the fundamental period given by~(\ref{eq:sievPatPeriod}). Assume that $\mathfrak{p}_n<\rho=\mathfrak{p}_{n+1}=\mathfrak{p}_{n+2}=\ldots=\mathfrak{p}_{n+g}<\mathfrak{p}_{n+g+1}$. According to Definition~\ref{def:sievSequence}, we have $\left(\rho,\mathcal{T}_n^\mathfrak{p}\right)=1$, hence, the same value of residues modulo $\rho$ repeats exactly every $\rho$-th multiply of $\mathcal{T}_n^\mathfrak{p}$, thus, $\mathcal{T}_{n+g}^\mathfrak{p}=\rho\mathcal{T}_n^\mathfrak{p}$, which is consistent with~(\ref{eq:sievPatPeriod}). We obtain, therefore, by induction, that the formula~(\ref{eq:sievPatPeriod}) is correct.
\end{proof}

\begin{lemma}
A sieving pattern $\mathcal{P}_n^{\mathfrak{p},\mathfrak{r}}$ has the average value
\begin{equation}
	\mathcal{D}_n^\mathfrak{p}=
	\prod\limits_{z=1}^{\infty}\frac{p_z-\#\{i\in \mathbb{Z}^+ : i\leq n, \mathfrak{p}_i=p_z\}}{p_z}.
\label{eq:patAvgDens}
\end{equation}
\label{lem:patAvgDens}
\end{lemma}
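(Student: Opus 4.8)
The plan is to read the \emph{average value} of the periodic pattern $\mathcal{P}_n^{\mathfrak{p},\mathfrak{r}}$ as its mean over one fundamental period, which exists and equals $\mathcal{T}_n^\mathfrak{p}$ by Lemma~\ref{lem:sievePatPeriod}. Since $\mathcal{P}_n^{\mathfrak{p},\mathfrak{r}}$ is the characteristic function of $\mathbb{Z}\setminus\mathcal{M}_n^{\mathfrak{p},\mathfrak{r}}$, this mean is precisely the fraction of residues modulo $\mathcal{T}_n^\mathfrak{p}$ on which the pattern takes the value $1$, i.e. the natural density of the surviving set. First I would record the pointwise description $\mathcal{P}_n^{\mathfrak{p},\mathfrak{r}}(z)=1$ if and only if $z\notin[\mathfrak{r}_i]_{\mathfrak{p}_i}$ for every $i\in\llbracket 1,n\rrbracket$, so that a residue survives exactly when it avoids every one of the $n$ removed classes.

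The key step is to organise the removed classes by their prime modulus and count the survivors by the Chinese Remainder Theorem. For each prime $p$ let $c_p=\#\{i\in\mathbb{Z}^+:i\le n,\ \mathfrak{p}_i=p\}$ be the multiplicity with which $p$ occurs among $\mathfrak{p}_1,\ldots,\mathfrak{p}_n$; only finitely many $c_p$ are nonzero, and by~(\ref{eq:sievSequence3}) each satisfies $c_p<p$. Constraint~(\ref{eq:sievSequence4}) guarantees that the residues $\mathfrak{r}_i$ sharing a common modulus $p$ are pairwise distinct, so modulo $p$ exactly $c_p$ residues are forbidden and $p-c_p\ge 1$ residues are allowed. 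Since the distinct primes occurring among the $\mathfrak{p}_i$ are pairwise coprime and their product is the period $\mathcal{T}_n^\mathfrak{p}$ (again by Lemma~\ref{lem:sievePatPeriod}), the Chinese Remainder Theorem yields a bijection between residues modulo $\mathcal{T}_n^\mathfrak{p}$ and tuples of residues modulo the individual occurring primes; such a residue survives if and only if each of its coordinates is allowed. Hence the number of surviving residues in one period is $\prod_p (p-c_p)$, the product ranging over the primes that occur.

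Dividing by $\mathcal{T}_n^\mathfrak{p}=\prod_p p$ gives the average value $\prod_p \frac{p-c_p}{p}$, the products running over the finitely many occurring primes. Finally I would extend the product to all primes: for a prime $p$ that does not occur one has $c_p=0$, so the corresponding factor equals $\frac{p-0}{p}=1$ and may be inserted harmlessly. Writing $p=p_z$, setting $c_{p_z}=\#\{i\in\mathbb{Z}^+:i\le n,\ \mathfrak{p}_i=p_z\}$, and letting $z$ range over $\mathbb{Z}^+$ then reproduces exactly the claimed formula~(\ref{eq:patAvgDens}); the infinite product converges trivially because all but finitely many factors equal $1$.

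The main obstacle is the counting step, namely establishing the CRT bijection cleanly and justifying that survivors correspond precisely to coordinatewise-allowed tuples. This is where the defining constraints do the real work: (\ref{eq:sievSequence3}) keeps every local factor positive, while (\ref{eq:sievSequence4}) ensures the $c_p$ forbidden residues modulo $p$ are genuinely distinct, so that the local survivor count is $p-c_p$ rather than something larger. Everything else is routine bookkeeping about finite versus infinite products.
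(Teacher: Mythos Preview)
Your argument is correct. The paper, however, proceeds differently: it proves the formula by induction on the successive blocks of equal primes in $(\mathfrak{p}_i)$. The base case handles the first block $\mathfrak{p}_1=\cdots=\mathfrak{p}_n=\eta$, giving density $(\eta-n)/\eta$; the inductive step adjoins the next block $\mathfrak{p}_{n+1}=\cdots=\mathfrak{p}_{n+g}=\rho$ and argues, via $(\rho,\mathcal{T}_n^\mathfrak{p})=1$, that among the surviving residue classes modulo $\mathcal{T}_n^\mathfrak{p}$ exactly a fraction $(\rho-g)/\rho$ remain after removing the $g$ new classes, so the density picks up that factor. Your approach instead applies the Chinese Remainder Theorem once to the full modulus $\mathcal{T}_n^\mathfrak{p}$, counting survivors as $\prod_p(p-c_p)$ in a single stroke. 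The two arguments are essentially the same content packaged differently: the paper's induction is CRT unrolled one prime at a time, while yours invokes the product structure globally. Your version is shorter and makes the role of conditions~(\ref{eq:sievSequence3}) and~(\ref{eq:sievSequence4}) more transparent; the paper's version avoids naming CRT explicitly and mirrors the inductive style of the preceding Lemma~\ref{lem:sievePatPeriod}.
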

\begin{proof}
By Lemma~\ref{eq:sievPatPeriod}, a sieving pattern $\mathcal{P}_n^{\mathfrak{p},\mathfrak{r}}$ is a periodic function, hence, it has a well-defined average value
\[
	\mathcal{D}_n^\mathfrak{p}=\frac{1}{\mathcal{T}_n^\mathfrak{p}}
	\sum\limits_{i=1}^{\mathcal{T}_n^\mathfrak{p}}\mathcal{P}_n^{\mathfrak{p},\mathfrak{r}}(i).
\]
We will prove that $\mathcal{D}_n^\mathfrak{p}$ is given by~(\ref{eq:patAvgDens}), using an inductive approach, similarly as for Lemma~\ref{lem:sievePatPeriod}.

Notice that, for $\eta=\mathfrak{p}_1=\mathfrak{p}_2=\ldots=\mathfrak{p}_n<\mathfrak{p}_{n+1}$, we have $\mathcal{D}_n^\mathfrak{p}=(\eta-n)/\eta$, which conforms the formula~(\ref{eq:patAvgDens}). Indeed, $\mathcal{T}_n^\mathfrak{p}=\eta$, by Lemma~\ref{lem:sievePatPeriod}, and $\sum_{k=1}^\eta\mathcal{P}_n^{\mathfrak{p},\mathfrak{r}}(k)=\eta-n$, because $\mathcal{P}_n^{\mathfrak{p},\mathfrak{r}}(k)=0$ if and only if $k\in\bigcup_{i=1}^n [\mathfrak{r}_i]_{\mathfrak{p}_i}$, according to Definitions~\ref{def:sievSequence} and~\ref{def:sievPattern}. Suppose that the average value of the function $\mathcal{P}_n^{\mathfrak{p},\mathfrak{r}}$ is given by~(\ref{eq:patAvgDens}). Assume that $\mathfrak{p}_n<\rho=\mathfrak{p}_{n+1}=\mathfrak{p}_{n+2}=\ldots=\mathfrak{p}_{n+g}<\mathfrak{p}_{n+g+1}$. According to~(\ref{eq:sievPatPeriod}), we have $\left(\rho,\mathcal{T}_n^\mathfrak{p}\right)=1$, hence, for each set $K_x=[x]_{\mathcal{T}_n^\mathfrak{p}}$, $x\in\mathbb{Z}$, such that $\mathcal{M}_n^{\mathfrak{p},\mathfrak{r}}\cap K_x=\emptyset$, there exist subsets $K_x \supset K_x^z=\left[\mathfrak{r}_z\right]_{\rho\mathcal{T}_n^\mathfrak{p}}$, where $z\in Z=\llbracket n\!+\!1,n\!+\!g\rrbracket$, such that $K_x^z\subset\mathcal{M}_{n+g}^{\mathfrak{p},\mathfrak{r}}$, according to Definitions~\ref{def:sievSequence} and~\ref{def:sievPattern}. Notice that $\#\big(K_x \cap T\big)/\#\big(K_x^z\cap T\big)=\rho$, for any $z\in Z$, $T=\left\llbracket m\!+\!1,m\!+\!\rho\mathcal{T}_n^\mathfrak{p}\right\rrbracket$, $m\in\mathbb{Z}$, where $\rho\mathcal{T}_n^\mathfrak{p}$ is the fundamental period of $\mathcal{P}_{n+g}^{\mathfrak{p},\mathfrak{r}}$, according to Lemma~\ref{lem:sievePatPeriod}. Consequently, we obtain $\mathcal{D}_{n+g}^{\mathfrak{p},\mathfrak{r}}=\frac{\rho-g}{\rho}\mathcal{D}_n^{\mathfrak{p},\mathfrak{r}}$, which conforms~(\ref{eq:patAvgDens}). Therefore, by induction, the formula~(\ref{eq:patAvgDens}) is correct.
\end{proof}

\begin{definition}
A prime sieving sequence of the form $\mathfrak{p}_n=p_{\alpha+\left\lceil n/\kappa\right\rceil-1}$, where $\alpha,\kappa\in\mathbb{Z}^+$, will be referred to as an $(\alpha,\kappa)$--regular prime sieving sequence.
\label{def:regularPrimeSeq}
\end{definition}

\begin{remark}
An ordered sieving model, sieving pattern, total sieving function, and expanding total sieve induced by an $(\alpha,\kappa)$--regular prime sieving sequence will be called $(\alpha,\kappa)$--regular and will be denoted by $\mathcal{M}^{\alpha,\kappa,\mathfrak{r}}$, $\mathcal{P}_n^{\alpha,\kappa,\mathfrak{r}}$, $\mathcal{S}_n^{\alpha,\kappa,\mathfrak{r}}$, and $\left(\mathcal{S}_n^{\alpha,\kappa,\mathfrak{r}}(z)\right)$, respectively.
\label{rem:regPatDef}
\end{remark}

\begin{remark}
It is easy to notice that, for an $(\alpha,\kappa)$--regular sieving pattern, the formulas~(\ref{eq:sievPatPeriod}) and~(\ref{eq:patAvgDens}) take the following more concise forms
\begin{equation}
	\mathcal{T}_n^{\alpha,\kappa}=\prod\limits_{i=0}^{\left\lceil n/\kappa\right\rceil-1}p_{\alpha+i}
	=\frac{p_{\alpha+\left\lceil n/\kappa\right\rceil-1}\#}{p_{\alpha-1}\#},
\label{eq:patPeriodReg}
\end{equation}
\begin{equation}
	\mathcal{D}_n^{\alpha,\kappa}=\left(1-\frac{n-\kappa\left\lfloor n/\kappa\right\rfloor}
	{p_{\alpha+\left\lfloor n/\kappa\right\rfloor}}\right)
	\prod\limits_{i=0}^{{\left\lfloor n/\kappa\right\rfloor}-1}\left(1-\frac{\kappa}{p_{\alpha+i}}\right),
\label{eq:patAvgDensReg}
\end{equation}
respectively, in particular, if $\kappa \mid n$, then
\begin{equation}
	\mathcal{D}_n^{\alpha,\kappa}=\prod\limits_{i=0}^{n/\kappa-1}\left(1-\frac{\kappa}{p_{\alpha+i}}\right).
\label{eq:patAvgDensReg2}
\end{equation}
\label{rem:patAvgDens}
\end{remark}

\begin{definition}
An $(\alpha,\kappa)$--regular sieving pattern such that $\mathfrak{p}_n=p_n$ and $\mathfrak{r}_n=0$ for each $n\in\mathbb{Z}^+$ will be called an Eratosthenes sieving pattern and will be denoted by $\mathcal{P}_n$.
\label{def:patEratosthenes}
\end{definition}

\begin{lemma}
Let $n\in\mathbb{Z}^+$. If $z\in\left\llbracket2,p_{n+1}^2\!-\!1\right\rrbracket$ and $\mathcal{P}_n(z)=1$, then $z\in\mathbb{P}$.
\label{lem:patEratPrimes}
\end{lemma}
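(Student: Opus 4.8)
The plan is to argue by contradiction, exploiting the elementary fact that a composite integer always carries a prime factor not exceeding its square root. First I would unwind the hypothesis: since $\mathcal{P}_n$ is the Eratosthenes sieving pattern, Definitions~\ref{def:patEratosthenes} and~\ref{def:sievPattern} give $\mathfrak{p}_i=p_i$ and $\mathfrak{r}_i=0$, so $\mathcal{P}_n(z)=1$ means precisely that $z\notin\mathcal{M}_n=\bigcup_{i=1}^n[0]_{p_i}$, i.e. none of the first $n$ primes $p_1,\dots,p_n$ divides $z$.

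Next I would suppose, for contradiction, that $z$ is not prime. Because $z\in\llbracket 2,p_{n+1}^2-1\rrbracket$ we have $z\ge 2$, so $z$ is composite and admits a least prime divisor $q$. I would invoke the standard bound $q\le\sqrt{z}$: writing $z=qm$ with $q$ least, every prime factor of $m$ is $\ge q$, whence $m\ge q$ and $z\ge q^2$. Combining this with the interval constraint $z\le p_{n+1}^2-1<p_{n+1}^2$ yields $q\le\sqrt{z}<p_{n+1}$, the strictness coming from $z<p_{n+1}^2$.

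From $q<p_{n+1}$ and the (weakly increasing, here strictly increasing) ordering of the prime sequence, the primes strictly below $p_{n+1}$ are exactly $p_1,\dots,p_n$, so $q=p_i$ for some $i\in\llbracket 1,n\rrbracket$. Then $z\in[0]_{p_i}\subseteq\mathcal{M}_n$, forcing $\mathcal{P}_n(z)=0$ and contradicting the hypothesis. Hence $z$ must be prime, which is the claim.

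There is essentially no serious obstacle: the statement is just the classical correctness of the sieve of Eratosthenes constructed up to $p_n$. The only points needing a little care are the conversion between \emph{$q$ is a prime below $p_{n+1}$} and \emph{$q\in\{p_1,\dots,p_n\}$}, which relies on the prime ordering, and the use of the strict inequality $p_{n+1}^2-1<p_{n+1}^2$ to deduce $q<p_{n+1}$ strictly, so that the excluded case $q=p_{n+1}$ (which would leave $z$ possibly unsieved) cannot occur.
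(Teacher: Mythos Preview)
Your proof is correct and follows essentially the same approach as the paper: both unwind the definition to obtain $(p_i,z)=1$ for all $i\le n$ and then use $z<p_{n+1}^2$ to conclude primality, with your version simply making explicit the standard fact that any composite has a prime factor at most its square root.
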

\begin{proof}
The lemma follows from the obvious observation that the function $\mathcal{P}_n$, according to its Definition~\ref{def:patEratosthenes}, is equivalent to the sieve of Eratosthenes constructed up to the prime $p_n$, where a position $z$ is sieved or unsieved if $\mathcal{P}_n(z)=0$ or $\mathcal{P}_n(z)=1$, respectively. More explicitly, according to Definition~\ref{def:patEratosthenes}, if $\mathcal{P}_n(z)=1$, then $(p_i,z)=1$ for each $i\in\llbracket 1,n\rrbracket$, but $z<p_{n+1}^2$, therefore $z$ is a prime number.
\end{proof}

\begin{lemma}
For the sequence of prime numbers $\left(p_n\right)$, we have $p_n=\omega(n)$.
\label{lem:primesOmN}
\end{lemma}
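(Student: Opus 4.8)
The plan is to prove the equivalent density statement $\pi(x)=o(x)$, where $\pi(x)=\#\{p\in\mathbb{P}:p\le x\}$, and then transfer it to the desired growth rate. The transfer is immediate: since $\pi(p_n)=n$ and $p_n\to\infty$, the relation $\pi(x)/x\to 0$ forces $n/p_n=\pi(p_n)/p_n\to 0$, which is precisely $p_n=\omega(n)$. Thus the whole task reduces to showing that the primes have asymptotic density zero.

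To bound $\pi(x)$ I would fix an arbitrary $k\in\mathbb{Z}^+$ and use that every prime larger than $p_k$ is coprime to the primorial $P_k=p_1p_2\cdots p_k$. Among any $P_k$ consecutive integers exactly $\phi(P_k)=P_k\prod_{i=1}^k\!\left(1-\tfrac{1}{p_i}\right)$ are coprime to $P_k$; this is the Chinese Remainder Theorem, and it is also the content of Lemma~\ref{lem:patAvgDens} specialised to the first $k$ classes of the $(1,1)$--regular Eratosthenes pattern, whose average value is $\prod_{i=1}^k(1-1/p_i)$ over the period $P_k$. Counting the integers in $\llbracket 1,x\rrbracket$ coprime to $P_k$ block by block and adding the at most $k$ exceptional small primes $p_1,\dots,p_k$ gives
\[
\pi(x)\le x\prod_{i=1}^k\Bigl(1-\frac{1}{p_i}\Bigr)+\phi(P_k)+k .
\]
Dividing by $x$ and letting $x\to\infty$ yields $\limsup_{x\to\infty}\pi(x)/x\le\prod_{i=1}^k(1-1/p_i)$ for every fixed $k$.

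The decisive step, and the only place where any real argument is needed, is to show that $\prod_{i=1}^k(1-1/p_i)\to 0$ as $k\to\infty$; note that I need only this qualitative fact, not the sharp Mertens asymptotics, so a short elementary estimate suffices. I would expand each factor as a geometric series and compare with the harmonic series:
\[
\prod_{i=1}^k\Bigl(1-\frac{1}{p_i}\Bigr)^{-1}=\prod_{i=1}^k\Bigl(1+\frac{1}{p_i}+\frac{1}{p_i^2}+\cdots\Bigr)\ge\sum_{n=1}^{p_k}\frac{1}{n}\ge\ln(p_k+1),
\]
the middle inequality holding because every integer $n\le p_k$ has all its prime factors among $p_1,\dots,p_k$ and all terms are nonnegative. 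As $p_k\to\infty$ the right-hand side diverges, so the reciprocal product tends to $\infty$ and hence $\prod_{i=1}^k(1-1/p_i)\to 0$.

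Combining the two displays gives $\limsup_{x\to\infty}\pi(x)/x=0$, i.e. $\pi(x)=o(x)$, and the transfer from the first paragraph then delivers $p_n=\omega(n)$. I expect no genuine obstacle beyond the harmonic-series comparison; the sieve bound and the final transfer are routine, and the whole argument stays self-contained and independent of the prime number theorem, as the surrounding discussion requires.
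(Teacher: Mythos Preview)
Your argument is correct: the Legendre--Eratosthenes sieve bound combined with the Euler-product comparison to the harmonic series is a standard and fully rigorous way to obtain $\pi(x)=o(x)$, and the transfer to $p_n=\omega(n)$ via $\pi(p_n)=n$ is immediate.

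The paper, however, takes a different route. It argues by contradiction: assuming $p_n\le Mn$ for all $n$, it interprets $\mathcal{L}_n=\prod_{i\le n}p_i/(p_i-1)$ as a lower bound on the asymptotic average prime gap $\overline{g}_n$, then uses the bound $p_i\le Mi$ together with an integral test to force $\mathcal{L}_n\to\infty$, contradicting the assumption that all gaps are at most $M$. Thus both proofs hinge on the divergence of $\prod_i(1-1/p_i)^{-1}$, but they establish and exploit it differently: you prove divergence unconditionally via the Euler identity and then feed it into a direct sieve count of $\pi(x)$, whereas the paper proves divergence only under the contradictory hypothesis and interprets the product through its average-gap formalism $\mathcal{L}_n=(\mathcal{D}_n^{1,1})^{-1}$. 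Your approach is the textbook one and is arguably cleaner and more self-contained; the paper's version stays inside the sieving-pattern language it has already set up and avoids introducing~$\pi(x)$ at all.
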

\begin{proof}
Let $p_n=c_nn$. Suppose that the lemma is false, i.e., there exists $M\in\mathbb{R}$ such that
\begin{equation}
	\left(\forall n\in\mathbb{Z}^+\right)\quad c_n\leq M.
\label{eq:primesGRBound}
\end{equation}
According to (\ref{eq:patAvgDensReg2}),
\begin{equation}
	\mathcal{L}_n=\left(\mathcal{D}_n^{1,1}\right)^{-1}=\prod\limits_{i=1}^n\frac{p_n}{p_n-1}
	=\prod\limits_{i=1}^n\frac{c_nn}{c_nn-1}
\label{eq:primesGapLB}
\end{equation}
represents the average distance between adjacent non-residual positions in $\mathcal{P}_n$ and, consequently, the lower bound on the asymptotic average value of a prime gap
\[
	\overline{g}_n=\lim\limits_{k\to\infty}\frac{1}{k}\sum\limits_{i=1}^k g_{n+i}
\]
where $g_n=p_{n+1}-p_n$. Combining~(\ref{eq:primesGRBound}) and~(\ref{eq:primesGapLB}), we obtain
\[
	\overline{g}_n\geq\prod\limits_{i=1}^n\frac{c_nn}{c_nn-1}\geq\mathcal{L}_n^*=\prod\limits_{i=1}^n\frac{Mn}{Mn-1}.
\]
Notice that
\[
	\int\limits_1^\infty \ln\frac{Mx}{Mx-1}dx
	=\lim\limits_{t\to\infty}\left[x\ln\frac{Mx}{Mx-1}+\frac{1}{M}\ln\left(Mx-1\right)\right]_1^t=\infty,
\]
therefore, $\lim_{n\to\infty}\ln\mathcal{L}_n^*=\infty$ and $\left(\overline{g}_n\right)$ is unbounded. However, the condition~(\ref{eq:primesGRBound}) implies that $g_n\leq M$, thus we obtain a contradiction.
\end{proof}

\begin{lemma}
Let $\mathcal{L}_n^{\alpha,\kappa}=\left(\mathcal{D}_n^{\alpha,\kappa}\right)^{-1}$. We have $\mathcal{L}_n^{\alpha,\kappa}=o(n)$.
\label{lem:patAvgLenAsymp}
\end{lemma}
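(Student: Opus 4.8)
The plan is to work directly with the closed form~(\ref{eq:patAvgDensReg}) for $\mathcal{D}_n^{\alpha,\kappa}$ and to reduce the claim to a single estimate on a product over primes. Writing $m=\lfloor n/\kappa\rfloor$, I would first peel off the prefactor of~(\ref{eq:patAvgDensReg}): since $n-\kappa\lfloor n/\kappa\rfloor\in\llbracket 0,\kappa-1\rrbracket$ while $p_{\alpha+m}\ge p_\alpha>\kappa$, the factor $1-\frac{n-\kappa\lfloor n/\kappa\rfloor}{p_{\alpha+m}}$ lies in the fixed interval $\left[1-\frac{\kappa-1}{p_\alpha},\,1\right]$ and is therefore bounded away from $0$ uniformly in $n$, so its reciprocal is bounded by a constant. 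Hence it suffices to control the reciprocal of the remaining product, i.e. to show that
\[
	\Pi_m\defn\prod_{i=0}^{m-1}\frac{p_{\alpha+i}}{p_{\alpha+i}-\kappa}=o(m),
\]
because then $\mathcal{L}_n^{\alpha,\kappa}=O(1)\cdot\Pi_{\lfloor n/\kappa\rfloor}=o(\lfloor n/\kappa\rfloor)=o(n)$, using $\lfloor n/\kappa\rfloor\le n$.

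Next I would pass to logarithms. Since $\kappa<p_\alpha\le p_{\alpha+i}$, each factor satisfies $0<\kappa/p_{\alpha+i}<1$, and the elementary estimate $-\ln(1-x)\le x/(1-x)$ for $0\le x<1$ gives
\[
	\ln\Pi_m=\sum_{i=0}^{m-1}\Bigl(-\ln\bigl(1-\tfrac{\kappa}{p_{\alpha+i}}\bigr)\Bigr)
	\le\sum_{i=0}^{m-1}\frac{\kappa}{p_{\alpha+i}-\kappa}.
\]
Because $p_{\alpha+i}\to\infty$, for all but finitely many $i$ one has $p_{\alpha+i}-\kappa\ge p_{\alpha+i}/2$, so up to an additive constant the right-hand side is at most $2\kappa\sum_{i}1/p_{\alpha+i}$. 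The whole problem thus reduces to the single estimate $\sum_{i=0}^{m-1}1/p_{\alpha+i}=o(\ln m)$.

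This last estimate is where Lemma~\ref{lem:primesOmN} enters, and it is the crux of the argument: I need an $o$-bound rather than a mere $O$-bound, and the slightly superlinear growth $p_n=\omega(n)$ is precisely what delivers it. Fix an arbitrary constant $C>0$; by $p_n=\omega(n)$ there is $N$ with $p_{\alpha+i}\ge C(\alpha+i)\ge Ci$ for all $i\ge N$. Splitting off the finitely many terms $i<N$ into a constant $A_C$, I obtain
\[
	\sum_{i=0}^{m-1}\frac{1}{p_{\alpha+i}}\le A_C+\frac{1}{C}\sum_{i=N}^{m-1}\frac{1}{i}
	\le A_C+\frac{1}{C}\bigl(1+\ln m\bigr).
\]
Dividing by $\ln m$ and letting $m\to\infty$ yields $\limsup_{m\to\infty}\bigl(\ln m\bigr)^{-1}\sum_{i=0}^{m-1}1/p_{\alpha+i}\le 1/C$, and since $C$ is arbitrary the limit is $0$; that is, the sum is $o(\ln m)$. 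Combining this with the previous paragraph gives $\ln\Pi_m=o(\ln m)$, hence $\ln\Pi_m-\ln m\to-\infty$, and exponentiating gives $\Pi_m=o(m)$, which completes the reduction. I expect the only delicate point to be precisely this passage from $\ln\Pi_m=o(\ln m)$ to $\Pi_m=o(m)$ together with the arbitrary-$C$ device; the remaining manipulations are routine, and it is worth stressing that the argument uses neither the prime number theorem nor Mertens' theorem, only the weak growth bound of Lemma~\ref{lem:primesOmN}.
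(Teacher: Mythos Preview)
Your proof is correct. Both you and the paper first reduce to showing that the product $\Pi_m=\prod_{i=0}^{m-1}\frac{p_{\alpha+i}}{p_{\alpha+i}-\kappa}$ is $o(m)$, but the core estimates diverge from there. The paper compares each factor $a_i=\frac{p_{\alpha+i}}{p_{\alpha+i}-\kappa}$ directly to the telescoping factor $b_i=\frac{i+2}{i+1}$, notes that $\prod_{i=0}^{m-1}b_i=m+1$, and uses $p_n=\omega(n)$ to conclude $a_i<b_i$ for large $i$; from this it asserts $\Pi_m=o(m)$. Your route instead takes logarithms, reduces to the reciprocal-prime sum $\sum_{i<m}1/p_{\alpha+i}$, and shows via the arbitrary-$C$ device that this sum is $o(\ln m)$, hence $\ln\Pi_m=o(\ln m)$ and $\Pi_m/m\to 0$. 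The paper's argument is shorter and more combinatorial, but as written it is slightly glib at the final step: merely having $a_i<b_i$ eventually gives $\Pi_m=O(m)$, not $o(m)$, and one really needs $\sum_i\ln(b_i/a_i)\to\infty$, which in turn comes down to exactly the estimate $\sum 1/p_{\alpha+i}=o(\ln m)$ that you prove explicitly. So your approach buys a fully justified little-$o$, at the cost of a longer analytic detour; the paper's approach is cleaner to read but tacitly relies on the same quantitative input you spell out.
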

\begin{proof}
According to Lemma~\ref{lem:patAvgDens} and Remark~\ref{rem:patAvgDens}, we obtain
\[
	\mathcal{L}_{\kappa n}^{\alpha,\kappa}=\prod\limits_{i=0}^{n-1}\frac{p_{\alpha+i}}{p_{\alpha+i}-\kappa}
	=\prod\limits_{i=0}^{n-1}a_i.
\]
Consider a sequence $\left(b_i\right)$, such that $b_i=(i+2)/(i+1)$. It is obvious that $\prod_{i=0}^{n-1} b_i=n+1$. Notice that $a_i<b_i$ for any sufficiently large $i\in\mathbb{Z}^+$, as $p_n=\omega(n)$, according to Lemma~\ref{lem:primesOmN}. Therefore, $\mathcal{L}_{\kappa n}^{\alpha,\kappa}=o\left(\prod_{i=0}^{n-1} b_i\right)=o(n)$. From the formula~(\ref{eq:patAvgDensReg}) we have immediately that $\mathcal{L}_{n+1}^{\alpha,\kappa}>\mathcal{L}_n^{\alpha,\kappa}$ for each $n\in\mathbb{Z}^+$ and, consequently, $\mathcal{L}_n^{\alpha,\kappa}=o(n)$.
\end{proof}

\begin{lemma}
A sequence $\left(\#\mathcal{S}_n^{\alpha,\kappa,\mathfrak{r}}(z)\right)$ oscillates infinitely many times around $\beta_n$, where
\begin{equation}
	\beta_n\leq\gamma_n=\frac{2\,n\,p_{\alpha+\left\lfloor n/\kappa\right\rfloor}}
	{p_{\alpha+\left\lfloor n/\kappa\right\rfloor}-n+\kappa\left\lfloor n/\kappa\right\rfloor}
	\prod\limits_{i=0}^{{\left\lfloor n/\kappa\right\rfloor}-1}\frac{p_{\alpha+i}}{p_{\alpha+i}-\kappa},
\label{eq:totalSieveUB}
\end{equation}
for any residue sieving sequence $\mathfrak{r}$ and any $z\in\mathbb{Z}$.
\label{lem:totalSieveUB}
\end{lemma}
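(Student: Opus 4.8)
The plan is to first rewrite the explicit bound in a form that exposes the average gap. Writing $n=\kappa\lfloor n/\kappa\rfloor+r$ and comparing (\ref{eq:totalSieveUB}) with the density formula (\ref{eq:patAvgDensReg}) of Remark~\ref{rem:patAvgDens}, one checks directly that
\[
\gamma_n=2n\,\mathcal{L}_n^{\alpha,\kappa},\qquad \mathcal{L}_n^{\alpha,\kappa}=\left(\mathcal{D}_n^{\alpha,\kappa}\right)^{-1},
\]
because the denominator $p_{\alpha+\lfloor n/\kappa\rfloor}-n+\kappa\lfloor n/\kappa\rfloor$ in (\ref{eq:totalSieveUB}) is exactly $p_{\alpha+\lfloor n/\kappa\rfloor}-(n-\kappa\lfloor n/\kappa\rfloor)$. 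Hence, by Lemma~\ref{lem:patAvgLenAsymp}, $\gamma_n=2n\cdot o(n)=o(n^2)$, which will supply the asymptotic claim $\beta_n=o(n^2)$ once the oscillation is in place. Next I would record the elementary description of the total sieve: when $z\in\mathcal{M}_n^{\alpha,\kappa,\mathfrak{r}}$, the set $\mathcal{S}_n^{\alpha,\kappa,\mathfrak{r}}(z)$ is precisely the maximal run of consecutive sieved positions through $z$, so $\#\mathcal{S}_n^{\alpha,\kappa,\mathfrak{r}}(z)=b_n-a_n-1$, where $a_n\le z\le b_n$ are the nearest positions with $\mathcal{P}_n^{\alpha,\kappa,\mathfrak{r}}(a_n)=\mathcal{P}_n^{\alpha,\kappa,\mathfrak{r}}(b_n)=1$. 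Since $\mathcal{M}_n^{\alpha,\kappa,\mathfrak{r}}\subseteq\mathcal{M}_{n+1}^{\alpha,\kappa,\mathfrak{r}}$, the sequence $(a_n)$ is non-increasing and $(b_n)$ non-decreasing, so $\left(\#\mathcal{S}_n^{\alpha,\kappa,\mathfrak{r}}(z)\right)$ is non-decreasing.

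The core of the argument is the upper half of the oscillation, namely that $\#\mathcal{S}_n^{\alpha,\kappa,\mathfrak{r}}(z)\le\gamma_n$ for infinitely many $n$. Here I would lean on periodicity and the average value. By Lemma~\ref{lem:sievePatPeriod} the pattern $\mathcal{P}_n^{\alpha,\kappa,\mathfrak{r}}$ repeats modulo $\mathcal{T}_n^{\alpha,\kappa}$, and by Lemma~\ref{lem:patAvgDens} it carries $\mathcal{D}_n^{\alpha,\kappa}\mathcal{T}_n^{\alpha,\kappa}$ unsieved positions per period, so the gaps between consecutive unsieved positions have average length $\mathcal{L}_n^{\alpha,\kappa}$; a Markov-type count then shows that at most a fraction $\tfrac{1}{2n}$ of these gaps can exceed $\gamma_n=2n\mathcal{L}_n^{\alpha,\kappa}$. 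The remaining, and decisive, step is to pass from this statement about the bulk of a period to the single fixed residue $z\bmod\mathcal{T}_n^{\alpha,\kappa}$. I would exploit the freedom in $n$ together with the monotonicity above: if $z$ lay in a run longer than $\gamma_n$ for every large $n$, then, since $\gamma_n$ grows like $n\mathcal{L}_n^{\alpha,\kappa}$ while the extension produced by introducing the $\kappa$ classes of a new prime $p_{\alpha+\lfloor n/\kappa\rfloor}$ is, among the surviving unsieved points, only about $\tfrac{\kappa}{p_{\alpha+\lfloor n/\kappa\rfloor}-\kappa}\mathcal{L}_n^{\alpha,\kappa}$ per side, the accumulated run length could not outpace $\gamma_n$ at every scale, contradicting a permanent excess.

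For the lower half, if $\left(\#\mathcal{S}_n^{\alpha,\kappa,\mathfrak{r}}(z)\right)$ is bounded then it is eventually at most $\beta_n$ and that direction is trivial; otherwise $\#\mathcal{S}_n^{\alpha,\kappa,\mathfrak{r}}(z)\to\infty$, and because each newly sieved endpoint merges the current run with a neighbouring gap of typical length $\mathcal{L}_n^{\alpha,\kappa}$, the sequence exceeds any fixed sub-$\gamma_n$ threshold $\beta_n$ infinitely often. Combining the two halves yields infinitely many sign changes of $\#\mathcal{S}_n^{\alpha,\kappa,\mathfrak{r}}(z)-\beta_n$, which is the asserted oscillation around a centre $\beta_n\le\gamma_n$.

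I expect the main obstacle to be exactly the transfer from "almost every position" to the fixed point $z$. Averaging over a period controls only a typical position, whereas the longest run in $\mathcal{P}_n^{\alpha,\kappa,\mathfrak{r}}$ can, in the spirit of the Jacobsthal function, itself exceed $\gamma_n$; consequently one cannot hope to bound $\#\mathcal{S}_n^{\alpha,\kappa,\mathfrak{r}}(z)$ uniformly in $n$, and the entire weight of the lemma rests on showing that a single fixed $z$ meets an average-sized run for infinitely many $n$. Turning the "extension per new prime" heuristic into a rigorous pointwise inequality, uniformly over the arbitrary residue sequence $\mathfrak{r}$, is the step I would scrutinise most carefully, since it is where a purely density-based argument is liable to break down.
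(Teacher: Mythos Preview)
Your approach diverges from the paper's in its central decomposition. You attempt a \emph{static} argument: at each fixed $n$, bound via a Markov-type count the fraction of positions in a period $\mathcal{T}_n^{\alpha,\kappa}$ whose enclosing gap exceeds $\gamma_n=2n\mathcal{L}_n^{\alpha,\kappa}$, and then argue that the fixed point $z$ must land in a short gap infinitely often. The paper instead argues \emph{incrementally}: it models the growth $\#\mathcal{S}_{n+1}^{\alpha,\kappa,\mathfrak{r}}(z)-\#\mathcal{S}_n^{\alpha,\kappa,\mathfrak{r}}(z)$ step by step, asserting that at step $n$ the two newly sieved positions $z_n^{\mathrm L},z_n^{\mathrm R}$ nearest the current sieve boundary extend it on each side by intervals of average length $\widetilde{\mathcal{L}}_n^{\alpha,\kappa}(z)\le\mathcal{L}_n^{\alpha,\kappa}$. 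Summing yields $\beta_n^*=2\sum_{i=1}^n\mathcal{L}_i^{\alpha,\kappa}\le 2n\mathcal{L}_n^{\alpha,\kappa}=\gamma_n$ as the barrier that $\#\mathcal{S}_n^{\alpha,\kappa,\mathfrak{r}}(z)$ ``cannot grow regularly faster than''. The inequality $\widetilde{\mathcal{L}}_n^{\alpha,\kappa}(z)<\mathcal{L}_n^{\alpha,\kappa}$ is justified via a heuristic superposition $\widetilde f_n=\sum_i\widetilde d_i$ of smoothed density contributions (Figure~\ref{fig:sieve}): just outside an extremal total sieve the density of sieved positions must dip below its global mean $1-\mathcal{D}_n^{\alpha,\kappa}$ to compensate.

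The gap you flag in your own argument is genuine: a Markov bound on gap lengths over a full period says nothing about the single residue class of $z$, and the Jacobsthal-type extremal gap can indeed exceed $\gamma_n$. Your attempted rescue through the per-prime increment $\tfrac{\kappa}{p_{\alpha+\lfloor n/\kappa\rfloor}-\kappa}\mathcal{L}_n^{\alpha,\kappa}$ is essentially the paper's incremental idea in embryo, but you have not turned it into a pointwise inequality valid for arbitrary $\mathfrak r$. Be aware that the paper does not close this gap either: its key step rests on smoothed interpolations, an unspecified extremal sequence $\mathfrak r^*$, and phrases such as ``with accuracy to minor fluctuations'' and ``does not grow systematically faster''. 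So while the paper's decomposition bypasses your averaging-to-pointwise transfer, it substitutes an equally unproven claim about the local density immediately outside the sieve boundary. Your final paragraph of caution is well placed; neither argument, as written, yields a rigorous proof of the lemma.
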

\begin{proof}
For concise, the variable $n$ will be called a step of expansion, and $z\in\mathbb{Z}$ such that $\mathcal{P}_n^{\alpha,\kappa,\mathfrak{r}}(z)=0$ or $\mathcal{P}_n^{\alpha,\kappa,\mathfrak{r}}(z)=1$ will be referred to as a sieved or unsieved position, respectively.

Suppose that $\mathfrak{r^*}$ is a variant of the residue sieving sequence  $\mathfrak{r}$ which maximises asymptotic grow rate of $\left(\#\mathcal{S}_n^{\alpha,\kappa,\mathfrak{r}}(z)\right)$ for fixed values of $\alpha$, $\kappa$, and $z$. In Fig.~\ref{fig:sieve}, an approximate density distribution of sieved positions inside and around the total sieve $\left(\mathcal{S}_n^{\alpha,\kappa,\mathfrak{r^*}}(z)\right)$ in the $n$-th step of expansion is represented by $\widetilde{f}_n:\mathbb{R}\mapsto[0,1]$, obtained by interpolation from $\mathbb{Z}$ to $\mathbb{R}$ and smoothing of $f_n(z)=1-\mathcal{P}_n^{\alpha,\kappa,\mathfrak{r^*}}(z)$. The density of sieved positions averages to $\mathcal{C}_n^{\alpha,\kappa}=1-\mathcal{D}_n^{\alpha,\kappa}$ on finite intervals, according to Lemmas~\ref{lem:sievePatPeriod} and~\ref{lem:patAvgDens}, therefore, the sieve $\mathcal{S}_n^{\alpha,\kappa,\mathfrak{r^*}}(z)=\left\llbracket s_n^\mathrm{L},s_n^\mathrm{R}\right\rrbracket$ is surrounded by intervals $\left\llbracket b_n^\mathrm{L},s_n^\mathrm{L}\!-\!1\right\rrbracket$ and $\left\llbracket s_n^\mathrm{R}\!+\!1,b_n^\mathrm{R}\right\rrbracket$ on which the average density of sieved positions is less than $\mathcal{C}_n^{\alpha,\kappa}$, so that the density averages to $\big(1\pm o(1)\big)\mathcal{C}_n^{\alpha,\kappa}$ on $\left\llbracket b_n^\mathrm{L},b_n^\mathrm{R}\right\rrbracket$. The distribution of sieved positions in $\mathcal{P}_n^{\alpha,\kappa,\mathfrak{r^*}}$ can be modelled more precisely as an additive superposition of contributions from all the steps $1,2,\ldots,n$. In the $i$-th step, the sieved positions included in the set $S_i=\mathcal{M}_i^{\alpha,\kappa,\mathfrak{r}^*}\setminus\mathcal{M}_{i-1}^{\alpha,\kappa,\mathfrak{r}^*}$ are added. We will approximate the characteristic function of $S_i\subset\mathbb{Z}$ by a continuous density function $\widetilde{d}_i:\mathbb{R}\mapsto[0,1]$, such that the superposition can be expressed in the form
\begin{equation}
	\widetilde{f}_n(\zeta)=\sum\limits_{i=1}^n\widetilde{d}_i(\zeta).
\label{eq:superposition}
\end{equation}
The superposition is conceptually presented in Fig.~\ref{fig:sieve}. Any function $\widetilde{d}_i$ is periodic, because the characteristic functions of $\mathcal{M}_i^{\alpha,\kappa,\mathfrak{r}^*}$ and, consequently, of $S_i$ are periodic. Hence, $\widetilde{d}_i$ oscillates infinitely many times around its mean value and it averages to this mean value on finite intervals of $\mathbb{R}$, not longer than $\mathcal{T}_i^{\alpha,\kappa}$, i.e., the fundamental period of $\widetilde{d}_i$. Observe that, to maximise $\#\mathcal{S}_n^{\alpha,\kappa,\mathfrak{r^*}}(z)$ for a given $n$, the values of $\left(\mathfrak{r^*}_i\right)_{i=1}^n$ should induce functions $\widetilde{d}_i$, such that the maxima of their oscillations are cumulated inside the sieve and, consequently, the minima are located just outside the sieve (at least in the average sense), as illustrated in Fig.~\ref{fig:sieve}. Moreover, if $i_1<i_2$, then $\widetilde{d}_{i_1}$ has greater local impact on $\widetilde{f}_n$ than $\widetilde{d}_{i_2}$, because $S_{i_1}$ has greater density in $\mathbb{Z}$ than $S_{i_2}$. Hence, we obtain that under $n\to\infty$, the closer to the sieve, the less superposed value of $\widetilde{f}_n$, with accuracy to minor fluctuations. It confirms the shape of the function $\widetilde{f}_n$ presented in Fig.~\ref{fig:sieve}.

Therefore, in the asymptotic behaviour, on some intervals adjacent to $\mathcal{S}_n^{\alpha,\kappa,\mathfrak{r^*}}(z)$ from the outside, the average density of sieved positions is stabilised at the value below $\mathcal{C}_n^{\alpha,\kappa}$ or, equivalently, the average distance $\widetilde{\mathcal{L}}_n^{\alpha,\kappa}(z)$ between adjacent unsieved positions is stabilised at the value below $\mathcal{L}_n^{\alpha,\kappa}=1/\left(1-\mathcal{C}_n^{\alpha,\kappa}\right)$. Let
\[
	z_n^\mathrm{L}=\argmin\limits_{i\in S_n,\,i<z}|z-i|,\qquad
	z_n^\mathrm{R}=\argmin\limits_{i\in S_n,\,i>z}|z-i|,\qquad
	\breve{S}_n =S_n\setminus\left\{z_n^\mathrm{L},z_n^\mathrm{R}\right\}
\]
and
\[
	I_n^\mathrm{L}=\left\{z\in\left\llbracket i,{z_n^\mathrm{L}-1}\right\rrbracket\::\:i\in\mathbb{Z},
	\sum\limits_{u=i}^{z_n^\mathrm{L}}\mathcal{P}_n^{\alpha,\kappa,\mathfrak{r^*}}(u)=0,\:
	\mathcal{P}_n^{\alpha,\kappa,\mathfrak{r^*}}(i-1)=1\right\},
\]
\[
	I_n^\mathrm{R}=\left\{z\in\left\llbracket {z_n^\mathrm{R}+1},i\right\rrbracket\::\:i\in\mathbb{Z},
	\sum\limits_{u=z_n^\mathrm{R}}^i\mathcal{P}_n^{\alpha,\kappa,\mathfrak{r^*}}(u)=0,\:
	\mathcal{P}_n^{\alpha,\kappa,\mathfrak{r^*}}(i+1)=1\right\}.
\]
The sieved positions $z_n^\mathrm{L}$ and $z_n^\mathrm{R}$, closest to $\mathcal{S}_n^{\alpha,\kappa,\mathfrak{r^*}}(z)$ on the left and right hand side in the $n$-th step (see the examples for $n$, $n+1$, $n+2$ in Fig.~\ref{fig:sieve}), append the new intervals $I_n^\mathrm{L}$ and $I_n^\mathrm{R}$ to the sieve, both having the average length $\widetilde{\mathcal{L}}_n^{\alpha,\kappa}(z)$, hence, the sieve expands with the average speed equal to $2\widetilde{\mathcal{L}}_n^{\alpha,\kappa}(z)$ per step. This process does not have to be perfectly regular, in the sense that $z_{n+1}^\mathrm{L}=\min\left(\mathcal{S}_n^{\alpha,\kappa,\mathfrak{r}^*}(z)\right)-1$ and $z_{n+1}^\mathrm{R}=\max\left(\mathcal{S}_n^{\alpha,\kappa,\mathfrak{r}^*}(z)\right)+1$ for each $n\in\mathbb{Z}^+$, because partial rearrangements of the sequences $\left(z_n^\mathrm{L}\right)$ and $\left(z_n^\mathrm{R}\right)$ are irrelevant to the effective sieve expansion. The remaining sieved positions inserted to $\mathcal{P}_n^{\alpha,\kappa,\mathfrak{r^*}}$ in the $n$-th step, i.e., those included in $\breve{S}_n$, increase the average density of sieved positions in $\mathcal{P}_n^{\alpha,\kappa,\mathfrak{r^*}}$ and, as a result, increase the average speed of further expansion of the sieve. However, these sieved positions cannot increase $\widetilde{\mathcal{L}}_n^{\alpha,\kappa}(z)$ over $\mathcal{L}_n^{\alpha,\kappa}$, because $\widetilde{\mathcal{L}}_n^{\alpha,\kappa}(z)$ remains effectively determined by the superposition~(\ref{eq:superposition}), which asserts that $\widetilde{\mathcal{L}}_n^{\alpha,\kappa}(z)<\mathcal{L}_n^{\alpha,\kappa}$, as we have shown before. In particular, according to Definition~\ref{def:regularPrimeSeq}, Lemma~\ref{lem:primesOmN}, and Lemma~\ref{lem:patAvgLenAsymp}, we have $\mathfrak{p}_n=\omega\left(\mathcal{L}_n^{\alpha,\kappa}\right)$, hence, the distribution of sieved positions virtually does not change in the neighbourhood of the positions where $z_n^\mathrm{L}$ and $z_n^\mathrm{R}$ are inserted (except for the insertions themselves) and we can correctly assume that the residues $z_n^\mathrm{L}$ and $z_n^\mathrm{R}$ are continuously inserted into a static pattern prepared in advance as a result of the mentioned superposition. As a consequence, the average speed of the expansion does not grow systematically faster than up to $2\mathcal{L}_n^{\alpha,\kappa}$ per step in the $n$-th step of the asymptotic behaviour. Therefore, we get that $\left(\#\mathcal{S}_n^{\alpha,\kappa,\mathfrak{r}^*}(z)\right)$ cannot grow regularly faster than the sequence
\[
	\beta_n^*=2\sum\limits_{i=1}^n\mathcal{L}_i^{\alpha,\kappa}
	=2\sum\limits_{i=1}^n\left(\mathcal{D}_i^{\alpha,\kappa}\right)^{-1},\qquad n\in\mathbb{Z}^+,
\]
hence, it oscillates infinitely many times around $\beta_n$, such that $\beta_n \leq \beta_n^*$. According to~(\ref{eq:patAvgDensReg}), $\mathcal{D}_{n+1}^{\alpha,\kappa}<\mathcal{D}_n^{\alpha,\kappa}$ for each $n\in\mathbb{Z}^+$, and we have
\begin{equation}
	\beta_n^*=2\sum\limits_{i=1}^n\left(\mathcal{D}_i^{\alpha,\kappa}\right)^{-1}
	\;\leq\;\frac{2n}{\mathcal{D}_n^{\alpha,\kappa}}=\gamma_n,\qquad n\in\mathbb{Z}^+.
	\label{eq:betaGamma}
\end{equation}
Applying~(\ref{eq:patAvgDensReg}) in~(\ref{eq:betaGamma}), we obtain~(\ref{eq:totalSieveUB}).
\end{proof}
\begin{figure}
	\centering
	\includegraphics[width=\textwidth]{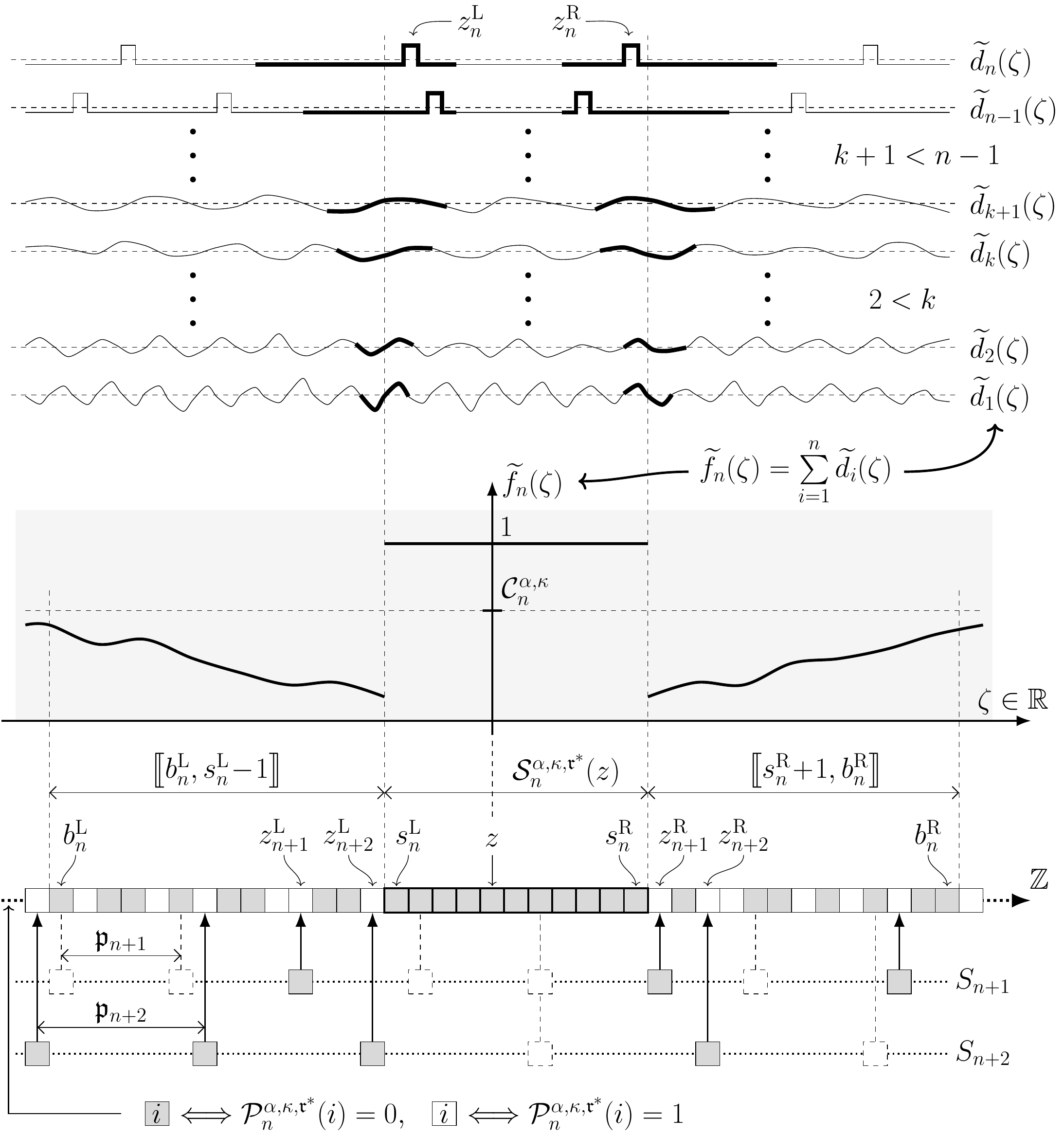}
	\caption{The model of an expanding total sieve and its surrounding}
	\label{fig:sieve}
\end{figure}

\begin{proposition}
For the sequence $\left(\gamma_n\right)$ used in the formula~(\ref{eq:totalSieveUB}) of Lemma~\ref{lem:totalSieveUB}, we have $\gamma_n=o(n^2)$.
\label{prop:primesOm}
\end{proposition}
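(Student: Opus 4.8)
The plan is to recognise that $\gamma_n$ is nothing more than $2n\,\mathcal{L}_n^{\alpha,\kappa}$ and then to invoke the asymptotic $\mathcal{L}_n^{\alpha,\kappa}=o(n)$ already established in Lemma~\ref{lem:patAvgLenAsymp}; the entire content of the proposition is therefore a one-line reduction to a lemma proved earlier. First I would match the closed form~(\ref{eq:totalSieveUB}) against the expression~(\ref{eq:patAvgDensReg}) for $\mathcal{D}_n^{\alpha,\kappa}$: the trailing product $\prod_{i=0}^{\lfloor n/\kappa\rfloor-1}\frac{p_{\alpha+i}}{p_{\alpha+i}-\kappa}$ appearing in $\gamma_n$ is exactly the reciprocal of $\prod_{i=0}^{\lfloor n/\kappa\rfloor-1}\bigl(1-\kappa/p_{\alpha+i}\bigr)$, while the leading fraction $\frac{p_{\alpha+\lfloor n/\kappa\rfloor}}{p_{\alpha+\lfloor n/\kappa\rfloor}-n+\kappa\lfloor n/\kappa\rfloor}$ is the reciprocal of the prefactor $1-\frac{n-\kappa\lfloor n/\kappa\rfloor}{p_{\alpha+\lfloor n/\kappa\rfloor}}$. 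Hence $\gamma_n=2n\bigl(\mathcal{D}_n^{\alpha,\kappa}\bigr)^{-1}=2n\,\mathcal{L}_n^{\alpha,\kappa}$; this is precisely the intermediate estimate~(\ref{eq:betaGamma}) used inside the proof of Lemma~\ref{lem:totalSieveUB}, so no fresh computation is required.

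With this identification in hand, I would apply Lemma~\ref{lem:patAvgLenAsymp} verbatim: since $\mathcal{L}_n^{\alpha,\kappa}=o(n)$, multiplying through by $2n$ yields $\gamma_n=2n\cdot o(n)=o(n^2)$, which is exactly the assertion.

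There is essentially no obstacle to overcome: the proposition is a packaging step that converts the average-length estimate of Lemma~\ref{lem:patAvgLenAsymp} into the quadratic bound feeding the Main Theorem. The only point demanding even slight care is the algebraic identification $\gamma_n=2n\,\mathcal{L}_n^{\alpha,\kappa}$, which follows at once from~(\ref{eq:patAvgDensReg}); all the genuine analytic work --- the comparison $a_i<b_i=(i+2)/(i+1)$ together with the telescoping product $\prod_{i=0}^{n-1}b_i=n+1$, powered by $p_n=\omega(n)$ from Lemma~\ref{lem:primesOmN} --- was already spent in establishing $\mathcal{L}_{\kappa n}^{\alpha,\kappa}=o(n)$.
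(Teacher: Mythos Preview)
Your proposal is correct and matches the paper's own proof essentially line for line: the paper likewise observes, by reference to~(\ref{eq:betaGamma}) and~(\ref{eq:patAvgDensReg}), that $\gamma_n=2n/\mathcal{D}_n^{\alpha,\kappa}=2n\,\mathcal{L}_n^{\alpha,\kappa}$, and then applies Lemma~\ref{lem:patAvgLenAsymp} to conclude $\gamma_n=2n\cdot o(n)=o(n^2)$. There is nothing to add or correct.
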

\begin{proof}
According to the given proof of Lemma~\ref{lem:totalSieveUB} and according to~(\ref{eq:patAvgDensReg}), $\gamma_n=2n/\mathcal{D}_n^{\alpha,\kappa}$. From Lemma~\ref{lem:patAvgLenAsymp}, we obtain $\gamma_n=2n\,o(n)=o(n^2)$.
\end{proof}

\section{Admissible patterns in prime numbers}
\label{sec:patterns}
\begin{definition}
An admissible $k$-tuple $T=(a_1,a_2,\ldots,a_k)$ is a collection of integers $a_1<a_2<\ldots<a_k$, such that $\bigcup_{i=1}^k [a_i]_p\neq\mathbb{Z}$ for each $p\in\mathbb{P}$.
\end{definition}

\begin{definition}
Let $\mathcal{P}:\mathbb{Z}\mapsto\{0,1\}$. We will say that a $k$-tuple $T=(a_1,a_2,\ldots,a_k)$ matches the pattern $\mathcal{P}$ at a position $m\in\mathbb{Z}$ if $\mathcal{P}(m+a_i)=1$ for each $i\in\llbracket 1,k\rrbracket$.
\end{definition}

\begin{definition}
Let $T=(a_1,a_2,\ldots,a_k)$ be an admissible $k$-tuple. Let $d\in\mathbb{Z}^+\setminus\{1\}$, such that $p_d>a_k-a_1$. Let $m\in\mathbb{Z}^+$, $m<p_{d-1}\#$, be a position at which $T$ matches $\mathcal{P}_{d-1}$. Let $g\in\mathbb{Z}^+$. A function $^\dagger\mathcal{P}^{T,k,d,m}_g:\mathbb{Z}\mapsto\{0,1\}$, such that
\begin{equation}
	^\dagger\mathcal{P}^{T,k,d,m}_g(z)=\begin{cases}
		0 &\text{if }\big(\exists\,(u,v)\in \llbracket 1,k\rrbracket\times \llbracket d,d\!+\!g\!-\!1\rrbracket\big)\\
		& \qquad m+a_u+(z-1)p_{d-1}\#\equiv 0 \Mod{p_v},\\
		1 &\text{otherwise},
	\end{cases}
\label{eq:tuplePrimorial}
\end{equation}
will be called a tuple-primorial sieving pattern.
\label{def:tuplePrimorial}
\end{definition}

\begin{lemma}
If $z\in\mathbb{Z}$ and $^\dagger\mathcal{P}^{T,k,d,m}_g(z)=1$, then the $k$-tuple $T$ matches the Eratosthenes sieving pattern $\mathcal{P}_{d+g-1}$ at the position $m+(z-1)p_{d-1}\#$.
\label{lem:tuplePrimToPrimes}
\end{lemma}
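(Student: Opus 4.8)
The plan is to verify directly that the hypothesis $^\dagger\mathcal{P}^{T,k,d,m}_g(z)=1$ forces each $M+a_i$, where $M=m+(z-1)p_{d-1}\#$, to be coprime to every one of the primes $p_1,\ldots,p_{d+g-1}$. By Definition~\ref{def:patEratosthenes} this is exactly the assertion $\mathcal{P}_{d+g-1}(M+a_i)=1$ for each $i\in\llbracket 1,k\rrbracket$, which by the matching definition is precisely the statement that $T$ matches $\mathcal{P}_{d+g-1}$ at $M$. The key observation guiding the argument is that the defining condition of $^\dagger\mathcal{P}^{T,k,d,m}_g$ constrains divisibility only by the primes $p_d,\ldots,p_{d+g-1}$; accordingly, I would split the index range $\llbracket 1,d+g-1\rrbracket$ into the upper block $\llbracket d,d+g-1\rrbracket$ and the lower block $\llbracket 1,d-1\rrbracket$, treating the two blocks with different hypotheses.

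For the upper block, I would simply read the claim off Definition~\ref{def:tuplePrimorial}. The assumption $^\dagger\mathcal{P}^{T,k,d,m}_g(z)=1$ means that no pair $(u,v)\in\llbracket 1,k\rrbracket\times\llbracket d,d+g-1\rrbracket$ satisfies $m+a_u+(z-1)p_{d-1}\#\equiv 0\Mod{p_v}$; that is, $p_v\nmid(M+a_u)$ for every such $u$ and $v$. This handles all primes $p_d,\ldots,p_{d+g-1}$ at once.

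For the lower block, the crucial point is the divisibility $p_j\mid p_{d-1}\#$ for every $j\in\llbracket 1,d-1\rrbracket$, which holds because $p_{d-1}\#=\prod_{i=1}^{d-1}p_i$. Consequently $M+a_i\equiv m+a_i\Mod{p_j}$ for every such $j$ and every $i$. I then invoke the standing hypothesis of Definition~\ref{def:tuplePrimorial} that $m$ is a position at which $T$ matches $\mathcal{P}_{d-1}$, so that $p_j\nmid(m+a_i)$ for all $j\le d-1$ and all $i$; combined with the displayed congruence this yields $p_j\nmid(M+a_i)$ throughout the lower block.

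Combining the two blocks shows that $M+a_i$ is divisible by none of $p_1,\ldots,p_{d+g-1}$ for any $i$, which is the desired conclusion. I do not expect a genuine obstacle here: the argument is essentially bookkeeping. The only point requiring care is to confirm that the two prime ranges together exhaust $\llbracket 1,d+g-1\rrbracket$, and that the reduction modulo $p_{d-1}\#$ is applied exclusively to the lower primes, since for the upper primes $p_d,\ldots,p_{d+g-1}$ the shift by $(z-1)p_{d-1}\#$ is not annihilated and must be tracked explicitly, which is exactly the role played by the $^\dagger$-condition.
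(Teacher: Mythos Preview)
Your proposal is correct and follows essentially the same approach as the paper: both arguments split the prime range into the lower block $\llbracket 1,d-1\rrbracket$ (handled via the hypothesis that $T$ matches $\mathcal{P}_{d-1}$ at $m$, together with the fact that shifting by a multiple of $p_{d-1}\#$ preserves residues modulo these primes) and the upper block $\llbracket d,d+g-1\rrbracket$ (read off directly from the definition of $^\dagger\mathcal{P}^{T,k,d,m}_g$). The only cosmetic difference is that the paper phrases the lower-block step as an appeal to the periodicity Lemma~\ref{lem:sievePatPeriod}, whereas you establish the relevant congruence $M+a_i\equiv m+a_i\Mod{p_j}$ directly from $p_j\mid p_{d-1}\#$.
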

\begin{proof}
By Definition~\ref{def:tuplePrimorial}, $T$ matches $\mathcal{P}_{d-1}$ at the position $m$. By Definition~\ref{def:patEratosthenes}, Lemma~\ref{lem:sievePatPeriod}, and according to the formula~(\ref{eq:patPeriodReg}), we obtain that $\mathcal{P}_{d-1}$ is a periodic function with the fundamental period $\mathcal{T}_{d-1}=p_{d-1}\#$. Hence, $T$ matches $\mathcal{P}_{d-1}$ at each position $m+i p_{d-1}\#$, $i\in\mathbb{Z}$. Consequently, $T$ matches $\mathcal{P}_{d+g-1}$ at the position $m+(z-1)p_{d-1}\#$, $z\in\mathbb{Z}$, if and only if $m+a_u+(z-1)p_{d-1}\#\not\equiv 0 \Mod{p_v}$ for each $(u,v)\in \llbracket 1,k\rrbracket\times \llbracket d,d\!+\!g\!-\!1\rrbracket$. The latter condition asserts that no element of the $k$-tuple $T$ matching $\mathcal{P}_{d-1}$ at the position $m+(z-1)p_{d-1}\#$ is sieved out by any $p_v\in\mathbb{P}$ for $v\in\llbracket d,d\!+\!g\!-\!1\rrbracket$. According to the formula~(\ref{eq:tuplePrimorial}) of Definition~\ref{def:tuplePrimorial}, this condition is satisfied if $^\dagger\mathcal{P}^{T,k,d,m}_g(z)=1$.
\end{proof}

\begin{lemma}
For each tuple-primorial sieving pattern $^\dagger\mathcal{P}^{T,k,d,m}_g$, there exists an $(\alpha,\kappa)$--regular sieving pattern $\mathcal{P}^{d,k,\mathfrak{r}}_{kg}$, such that $^\dagger\mathcal{P}_g^{T,k,d,m}=\mathcal{P}^{d,k,\mathfrak{r}}_{kg}$.
\label{lem:tupleSieveing}
\end{lemma}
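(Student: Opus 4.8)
The plan is to exhibit the residue sieving sequence $\mathfrak{r}$ explicitly by solving, for each defining congruence of $^\dagger\mathcal{P}^{T,k,d,m}_g$, for the variable $z$, and then to repackage the resulting residue classes into the $(d,k)$--regular format with $\alpha=d$, $\kappa=k$, $n=kg$. Write $P=p_{d-1}\#$. For a fixed pair $(u,v)\in\llbracket 1,k\rrbracket\times\llbracket d,d\!+\!g\!-\!1\rrbracket$, the sieving condition $m+a_u+(z-1)P\equiv 0\Mod{p_v}$ is, since $p_v\geq p_d$ is coprime to $P$, equivalent to a single congruence $z\equiv\mathfrak{r}_{u,v}\Mod{p_v}$, where $\mathfrak{r}_{u,v}\equiv 1-(m+a_u)P^{-1}\Mod{p_v}$ is reduced to $\llbracket 0,p_v\!-\!1\rrbracket$. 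Hence $^\dagger\mathcal{P}^{T,k,d,m}_g(z)=0$ precisely when $z\in\bigcup_{v=d}^{d+g-1}\bigcup_{u=1}^k [\mathfrak{r}_{u,v}]_{p_v}$.

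First I would reindex these $kg$ residue classes by the single index $i=(v-d)k+u$, so that as $(u,v)$ ranges over its domain $i$ ranges over $\llbracket 1,kg\rrbracket$ and $\left\lceil i/k\right\rceil=v-d+1$; consequently the $(d,k)$--regular prime sieving sequence gives $\mathfrak{p}_i=p_{d+\left\lceil i/k\right\rceil-1}=p_v$. Setting $\mathfrak{r}_i=\mathfrak{r}_{u,v}$ for the corresponding pair then yields $\mathcal{M}_{kg}^{d,k,\mathfrak{r}}=\bigcup_{i=1}^{kg}[\mathfrak{r}_i]_{\mathfrak{p}_i}=\bigcup_{v=d}^{d+g-1}\bigcup_{u=1}^k[\mathfrak{r}_{u,v}]_{p_v}$, which is exactly the zero set of $^\dagger\mathcal{P}^{T,k,d,m}_g$. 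Since both functions are characteristic functions of the complements of equal sets, they coincide, and the identification $^\dagger\mathcal{P}_g^{T,k,d,m}=\mathcal{P}^{d,k,\mathfrak{r}}_{kg}$ follows at once.

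The hard part, and the only place where the hypotheses on $T$ and $d$ are actually used, will be to confirm that the sequence $\mathfrak{r}$ so defined is a legitimate residue sieving sequence in the sense of Definition~\ref{def:sievSequence}, i.e.\ that the claimed $(d,k)$--regular pattern exists at all. Conditions~(\ref{eq:sievSequence1}) and~(\ref{eq:sievSequence2}) are immediate from the reindexing and from the reduction modulo $p_v$. For condition~(\ref{eq:sievSequence4}) I would note that two classes sharing the modulus $p_v$ arise from $u_1\neq u_2$ and collide only if $a_{u_1}\equiv a_{u_2}\Mod{p_v}$; but $0<|a_{u_1}-a_{u_2}|\leq a_k-a_1<p_d\leq p_v$ forbids this, so the $k$ residues attached to each $p_v$ are pairwise distinct. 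The remaining condition~(\ref{eq:sievSequence3}) requires $k<p_v$ for every $v\in\llbracket d,d\!+\!g\!-\!1\rrbracket$; here the interval bound $p_d>a_k-a_1$ alone is not enough, and I would invoke admissibility of $T$ at the prime $p_d$: since the $a_i$ occupy exactly $k$ distinct residues modulo $p_d$ (again because $a_k-a_1<p_d$) and admissibility forbids them from covering $\mathbb{Z}/p_d\mathbb{Z}$, we obtain $k<p_d\leq p_v$. This same inequality simultaneously justifies the constraint $\kappa<p_\alpha$ underlying the $(\alpha,\kappa)$--regular construction with $\alpha=d$ and $\kappa=k$.
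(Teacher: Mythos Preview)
Your proof is correct and follows essentially the same route as the paper's: both solve the congruence $m+a_u+(z-1)p_{d-1}\#\equiv 0\Mod{p_v}$ using $(p_v,p_{d-1}\#)=1$, obtain residue classes $[\mathfrak{r}_{u,v}]_{p_v}$, and verify distinctness within each modulus via $|a_{u_1}-a_{u_2}|\leq a_k-a_1<p_d\leq p_v$. Your argument is in fact slightly more thorough, since you explicitly verify condition~(\ref{eq:sievSequence3}) (equivalently $\kappa<p_\alpha$) by invoking admissibility of $T$ at $p_d$, a point the paper leaves implicit.
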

\begin{proof}
Choose arbitrarily a tuple-primorial sieving pattern $^\dagger\mathcal{P}^{T,k,d,m}_g$ related to a $k$-tuple $T=(a_1,a_2,\ldots,a_k)$. Notice that, according to Definition~\ref{def:tuplePrimorial}, $^\dagger\mathcal{P}^{T,k,d,m}_g$ is the characteristic function of the set $\mathbb{Z}\setminus{^\dagger\mathcal{M}^{T,k,d,m}_g}$ such that, from~(\ref{eq:tuplePrimorial}), we have
\begin{equation}
	^\dagger\mathcal{M}^{T,k,d,m}_g=\bigcup\limits_{v=d}^{d+g-1}\bigcup\limits_{u=1}^k
	\Big(\big\{\left(xp_v-m-a_u\right)/p_{d-1}\#+1,\,x\in\mathbb{Z}\big\}\cap\mathbb{Z}\Big).
\label{eq:tupleSieveingArithmA}
\end{equation}

It is obvious that $(p_v,p_{d-1}\#)=1$ for any $v\in V=\llbracket d,d+g-1\rrbracket$, therefore, the formula~(\ref{eq:tupleSieveingArithmA}) can be simplified to
\begin{equation}
	^\dagger\mathcal{M}^{T,k,d,m}_g=\bigcup\limits_{v=d}^{d+g-1}\bigcup\limits_{u=1}^k
	\big\{xp_v+r_{u,v},\,x\in\mathbb{Z}\big\}
	=\bigcup\limits_{v=d}^{d+g-1}\bigcup\limits_{u=1}^k[r_{u,v}]_{p_v},
\label{eq:tupleSieveingArithmB}
\end{equation}
where $r_{u,v}\in\mathbb{Z}$ is a function of $p_v$, $a_u$, $m$, and $p_{d-1}$, which has the property such that for any $u_1,u_2\in\llbracket 1,k\rrbracket$, $v\in V$, if $u_1 \neq u_2$, then $r_{u_1,v} \not\equiv r_{u_2,v} \pmod{p_v}$. Indeed, assuming $r_{u_1,v} \equiv r_{u_2,v} \pmod{p_v}$, we obtain, by comparison of~(\ref{eq:tupleSieveingArithmA}) and~(\ref{eq:tupleSieveingArithmB}), that $a_{u_1} \equiv a_{u_2} \pmod{p_v}$, and consequently $u_1=u_2$, because $\left|a_{u_1}-a_{u_2}\right|\leq a_k-a_1<p_d\leq p_v$, according to Definition~\ref{def:tuplePrimorial}. Therefore, the set $^\dagger\mathcal{M}^{T,k,d,m}_g$ is a union of pairwise distinct residue classes which can be considered as the first $kg$ elements of a sequence $\left([\mathfrak{r}_i]_{\mathfrak{p}_i}\right)$ such that $\left(\mathfrak{r}_i\right)$ conforms the definition of residue sieving sequence (Definition~\ref{def:sievSequence}) and $\left(\mathfrak{p}_i\right)$ conforms the definition of $(\alpha,\kappa)$--regular prime sieving sequence (Definitions~\ref{def:sievSequence} and~\ref{def:regularPrimeSeq}) under $\alpha=d$ and $\kappa=k$. As a result, we have $^\dagger\mathcal{M}^{T,k,d,m}_g=\mathcal{M}_{kg}^{d,k,\mathfrak{r}}$ and, according to Definition~\ref{def:sievPattern} and Remark~\ref{rem:regPatDef}, $^\dagger\mathcal{P}^{T,k,d,m}_g=\mathcal{P}_{kg}^{d,k,\mathfrak{r}}$.
\end{proof}

\begin{mtheorem}
Every admissible $k$-tuple matches infinitely many positions in the sequence of prime numbers.
\end{mtheorem}
\begin{proof}
Choose any admissible $k$-tuple $T$ and suppose that it matches only finitely many positions in the sequence of primes. Consider a tuple-primorial sieving pattern $^\dagger\mathcal{P}^{T,k,d,m}_n$ with arbitrary values of $d$ and $m$, satisfying Definition~\ref{def:tuplePrimorial}. Let $z\in\mathbb{Z}$. According to Lemma~\ref{lem:tuplePrimToPrimes}, $^\dagger\mathcal{P}^{T,k,d,m}_n(z)=1$ implies that $T$ matches the Eratosthenes sieving pattern $\mathcal{P}_{d+n-1}$ at the position $m+(z-1)p_{d-1}\#$. By Lemma~\ref{lem:patEratPrimes}, if $z\in\left\llbracket 2,p_{n+1}^2\!-\!1\right\rrbracket$ and ${\mathcal{P}_n(z)=1}$, then $z\in\mathbb{P}$. Therefore, if $T$ matches primes only finitely many times, there exists ${z^*\in\mathbb{Z}^+}$, such that $^\dagger\mathcal{P}^{T,k,d,m}_n(z)=0$ for each position $z\in Z_n^{d,m}=\left\llbracket z^*,(p_{d+n}^2-m)/p_{d-1}\#\right\rrbracket$. From Lemma~\ref{lem:tupleSieveing}, we have that there exist a residue sieving sequence $\mathfrak{r}$ and a related sequence of $(\alpha,\kappa)$--regular sieving patterns $\left(\mathcal{P}^{d,k,\mathfrak{r}}_{kn}\right)$, such that $\mathcal{P}^{d,k,\mathfrak{r}}_{kn}={^\dagger\mathcal{P}^{T,k,d,m}_n}$ for any $n\in\mathbb{Z}^+$, hence, the patterns $\mathcal{P}^{d,k,\mathfrak{r}}_{kn}$ include total sieves $\mathcal{S}^{d,k,\mathfrak{r}}_{kn}(\widetilde{z})$ for some fixed $\widetilde{z}\in\mathbb{Z}$, such that ${Z_n^{d,m}\subseteq\mathcal{S}^{d,k,\mathfrak{r}}_{kn}(\widetilde{z})}$. It implies the following asymptotic relations
\begin{equation}
	\#Z_n^{d,m}\asymp p_{d+n}^2\sim p_n^2\ll \#\mathcal{S}^{d,k,\mathfrak{r}}_{kn}(\widetilde{z})
	\quad \text{under } n\to\infty.
\label{eq:sievingAsymp}
\end{equation}
From Lemma~\ref{lem:totalSieveUB} and Proposition~\ref{prop:primesOm}, we obtain that there exists a strictly increasing sequence $\left(\nu_n\right)$, where $\nu_n\in\mathbb{Z}^+$, such that $\#\mathcal{S}^{d,k,\mathfrak{r}}_{k\nu_n}(\widetilde{z})=o(\nu_n^2)$ under $n\to\infty$. Applying Lemma~\ref{lem:primesOmN}, we get $\#\mathcal{S}^{d,k,\mathfrak{r}}_{k\nu_n}(\widetilde{z})=o(p_{\nu_n}^2)$, which is in contradiction with~(\ref{eq:sievingAsymp}).
\end{proof}

\bibliographystyle{amsplain}
\bibliography{references}

\end{document}